\newtheorem{proposition}{Proposition}
\newtheorem{lemma}{Lemma}
\newtheorem{cor}{Corollary}
\newtheorem{thm}{Theorem}
\newtheorem{conjecture}{Conjecture}
\newtheorem{rmk}{Remark}\theoremstyle{remark}
\newcommand{\CC}{\mathbb C}
\newcommand{\Fp}{{\mathbb F}_p}
\newcommand{\Fq}{{\mathbb F}_q}
\newcommand{\Ql}{\bar{\mathbb Q}_\ell}
\newcommand{\FF}{\mathcal F}
\newcommand{\LL}{\mathcal L}
\newcommand{\R}{\mathrm R}
\newcommand{\AAA}{\mathbb A}
\newcommand{\HHH}{\mathrm H}
\title{Finite monodromy of some families of exponential sums}
\author{Antonio Rojas-Le\'on}
\address{Dpto. de \'Algebra, Fac. de Matem\'aticas \\
  Universidad de Sevilla \\
  c/Tarfia, s/n \\
  41013 Sevilla, Spain}
\email{arojas@us.es}
\begin{document}

\maketitle

\renewcommand{\thefootnote}{}
\footnote{Mathematics Subject Classification: 11L05, 11T23}
\footnote{Partially supported by MTM2016-75027-P (Ministerio de Econom\'{\i}a y Competitividad) and FEDER}

\begin{abstract}
Given a prime $p$ and an integer $d>1$, we give a numerical criterion to decide whether the $\ell$-adic sheaf associated to the one-parameter exponential sums $t\mapsto \sum_x\psi(x^d+tx)$ over $\Fp$ has finite monodromy or not, and work out some explicit cases where this is computable.
\end{abstract}

\section{Introduction}

Consider the affine line $\AAA^1_k$ over a finite field $k$ of characteristic $p>0$. Let $\ell\neq p$ be a prime, and ${\mathcal F}$ al $\ell$-adic sheaf on $\AAA^1_k$ of rank $n$, which can be regarded as a continuous representation
$$
\rho:\pi_1(\AAA^1_k,\bar\eta)\to\mathrm{GL}(n,\Ql)
$$
where $\bar\eta$ is a geometric generic point of $\AAA^1_k$. The arithmetic and geometric monodromy groups $G^{arith}$ and $G^{geom}$ of ${\mathcal F}$ are defined to be the Zariski closures of the images of $\rho$ (resp. of its subgroup $\pi_1(\AAA^1_{\bar k},\bar\eta)$). According to \cite{deligne1980conjecture} (see \cite[Chapter 3]{katz1988gauss} for a more explicit statement), under certain conditions (which are usually fulfilled after taking a Tate twist of $\mathcal F$), these groups govern the distribution of the Frobenius traces of the sheaf $\mathcal F$: more precisely, if $G^{geom}=G^{arith}$, the Frobenius traces are equidistributed as the traces of random elements of a maximal compact subgroup of $G^{geom}$ as $\# k$ grows.

These groups also determine the asymptotic values of the higher moments associated to the trace function of $\mathcal F$, which are related to the dimension of the invariant subspaces of certain tensor powers of the given representation of $G^{geom}$ via $\rho$. See \cite{katz2005mma} for a detailed exposition of the topic.

In this article we will be concerned with a special class of sheaves, which are a subset of the class of so-called Airy sheaves, smooth sheaves on $\AAA^1_k$ of rank $n$ with a single slope $\frac{n+1}{n}$ at infinity, which can also be characterized as the Fourier transform of smooth sheaves of rank $1$ with slope $>1$ at infinity. The monodromy of these sheaves was extensively studied by O. Šuch in \cite{such2000monodromy}, who gave a full classification of their possible non-finite monodromy groups \cite[Propositions 11.6, 11.7]{such2000monodromy}.

Let $d\geq 2$ be a prime to $p$ integer. Let $k=\Fq$ and let $\psi:k\to\CC$ be the additive character given by $\psi(t)=\exp(2\pi it/p)$. Let $[d]:\AAA^1_k\to\AAA^1_k$ be the $d$-th power map, and $\LL_{\psi(t^d)}=[d]^\ast\LL_\psi$ the pull-back of the Artin-Schreier sheaf $\LL_\psi$ on $\AAA^1_k$ associated to $\psi$. It is a smooth sheaf on $\AAA^1_k$ of rank $1$, with slope $d$ at infinity. Its Fourier transform ${\mathcal F}_d$ is then a smooth Airy sheaf on $\AAA^1_k$ of rank $d-1$ with a single slope $\frac{d}{d-1}$ at infinity. The Frobenius trace of ${\mathcal F}_d$ at a point $t\in k$ is given (up to sign) by
$$
\sum_{x\in k}\psi(x^d+tx).
$$

 The main goal of this article is giving a numerical criterion to determine whether the geometric monodromy (and therefore the arithmetic one after a suitable Tate twist) of ${\mathcal F}_d$ is finite. This is done in Proposition \ref{numcriterion}, and some specific cases are worked out explicitly in section  4. Moreover we show that, in the case where the monodromy is not finite, the given representation of the monodromy group is Lie irreducible, which allows to completely determine the arithmetic and geometric monodromy groups via the results in \cite{such2000monodromy}. 
 
 The most important case for applications is $p=2$. In that case, we show that the monodromy of ${\mathcal F}_d$ is finite for $d$ of the form $2^a+1$ or $\frac{2^a+1}{2^b+1}$, and we conjecture that these are the only cases where the monodromy is finite. This case is important for its relation with \emph {almost perfect nonlinear} functions. If the monodromy is not finite, then both $G^{geom}$ and $G^{arith}$ are the full symplectic group $\mathrm{Sp}(d-1,{\mathbb C})$. In particular, the (arithmetic) fourth moment of the trace function of ${\mathcal F}_d$ is 3. But this fourth moment can be computed explicitly, and it is equal to the number of (absolute) irreducible components of the polynomial $x^d+y^d+z^d+(x+y+z)^d\in k[x,y,z]$ minus one. So, if ${\mathcal F}_d$ does not have finite monodromy, the polynomial
 $$
 \frac{x^d+y^d+z^d+(x+y+z)^d}{(x+y)(x+z)(y+z)}
 $$
 is absolutely irreducible. By \cite{aubry} this implies that the function $f(x)=x^d$ is not almost perfect nonlinear on ${\mathbb F}_{2^n}$ for any sufficiently large $n$. This gives a new algebro-geometric approach to the study of such problems.
 
 The author would like to thank Daqing Wan for bringing this problem to his attention.

\section{A numerical criterion for the finiteness of the monodromy of ${\mathcal F}_d$}

Let $k=\Fp$ and ${\mathcal F}_d$ be as in the introduction, with $d\geq 3$ prime to $p$. We want to determine the values of $(p,d)$ such that ${\mathcal F}_d$ has finite geometric monodromy.

\begin{lemma}
 The determinant of the Tate-twisted sheaf ${\mathcal F}_d(1)$ is geometrically trivial and arithmetically of finite order.
\end{lemma}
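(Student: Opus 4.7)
The plan is to prove the two assertions separately. Since $\Ql(1)$ is geometrically trivial, the geometric triviality of $\det(\mathcal{F}_d(1))$ is equivalent to that of $\det(\mathcal{F}_d)$ as a representation of $\pi_1(\AAA^1_{\bar k})$; once this is in place, the arithmetic question reduces to analysing a single Frobenius scalar on the now geometrically constant rank one sheaf.

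For geometric triviality, I would show that $\det(\mathcal{F}_d)$ is tame at $\infty$; since it is smooth on $\AAA^1$ and the tame fundamental group of $\AAA^1_{\bar k}$, allowing tame ramification only at $\infty$, is trivial, this forces geometric triviality. To control $\mathcal{F}_d|_{I_\infty}$ I would use Laumon's local Fourier transform, or equivalently stationary phase applied to $f(x)=x^d$: after pulling back along the Kummer $(d-1)$-fold cover $\pi\colon s\mapsto s^{d-1}$, which is totally ramified at $\infty$, the representation $\pi^*\mathcal{F}_d|_{I_\infty}$ decomposes as a direct sum of $d-1$ rank one characters of the shape $\LL_{\psi(\beta\zeta^d s^d)}\otimes(\text{tame})$, indexed by $\zeta\in\mu_{d-1}(\bar k)$, for some nonzero constant $\beta$ arising from the critical values of $x^d+tx$. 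Taking the determinant gives a character whose wild part is $\LL_{\psi(\beta(\sum_\zeta\zeta^d)\,s^d)}$. Because $\gcd(d,d-1)=1$ and $d-1\geq 2$, the map $\zeta\mapsto\zeta^d$ is a bijection of $\mu_{d-1}$, so $\sum_\zeta\zeta^d=\sum_\zeta\zeta=0$. Hence $\pi^*\det(\mathcal{F}_d)$ is tame at $\infty$; since pullback by $\pi$ multiplies the Swan conductor at $\infty$ by the ramification index $d-1$, the sheaf $\det(\mathcal{F}_d)$ itself is tame there, and therefore geometrically trivial.

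For arithmetic finite order, $\det(\mathcal{F}_d(1))$ is now a continuous character of $\Gal(\bar k/k)\cong\widehat{\ZZ}$, completely determined by a single Frobenius scalar $\lambda$. The Frobenius traces of $\mathcal{F}_d$ are $\psi$-sums, hence algebraic integers, so the determinant of Frobenius on $\mathcal{F}_d$ is an algebraic integer too. Combined with the purity of $\mathcal{F}_d$, the Tate normalisation by $(1)$ ensures that all Galois conjugates of $\lambda$ have complex absolute value $1$, and Kronecker's theorem then yields that $\lambda$ is a root of unity.

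The main obstacle is the first step: identifying the structure of $\mathcal{F}_d|_{I_\infty}$ carefully enough to compute its determinant. The precise value of $\beta$ and the tame characters depend on normalisation conventions for the local Fourier transform, but the key input, the vanishing of $\sum_{\zeta\in\mu_{d-1}}\zeta^d$, depends only on the coarse shape of the decomposition, so these normalisation issues should not affect the conclusion.
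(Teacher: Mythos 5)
Your argument for arithmetic finite order has a genuine gap at the final step. You reduce to a single Frobenius scalar $\lambda$ on the geometrically constant sheaf $\det\mathcal F_d(1)$, observe that all its conjugates have complex absolute value $1$, and invoke Kronecker. But Kronecker's theorem applies only to \emph{algebraic integers}, and $\lambda$ is the determinant of Frobenius on $\mathcal F_d$ divided by $q^{(d-1)/2}$; the numerator is indeed an algebraic integer, but the quotient need not be. An algebraic number that is a unit away from $p$ and has all archimedean absolute values equal to $1$ can fail to be a root of unity: for example $(2+i)^2/5=(3+4i)/5$, or more to the point $\alpha/\sqrt p$ for $\alpha$ a Frobenius eigenvalue of an ordinary elliptic curve. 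The whole content of the statement is the $p$-adic integrality of $\lambda$, and your proposal never addresses the places above $p$. The paper closes exactly this gap by an explicit computation: it evaluates the $L$-function of $\mathcal F_d$ at $t=0$ over all extensions, identifies the Frobenius eigenvalues there as the Gauss sums $G(\psi_1,\chi)$ over the nontrivial $\chi$ with $\chi^d=\mathbf 1$, and uses the identity $G(\psi_1,\chi)G(\psi_1,\chi^{-1})=\chi(-1)q$ to show the product is precisely $q^{(d-1)/2}$ times a root of unity. Some such input (or an equivalent self-duality/functional-equation argument pairing $\alpha$ with $q/\alpha$) is indispensable.

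On the geometric part: the paper simply cites \cite[Theorem 17]{katz-monodromy}, whereas you propose to reprove it via local Fourier transform at infinity. The strategy (kill the wild part of $\det\mathcal F_d$ at $\infty$, then use triviality of the tame-at-infinity quotient of $\pi_1(\AAA^1_{\bar k})$) is sound, but your stationary-phase decomposition into $d-1$ characters indexed by $\zeta\in\mu_{d-1}$ with wild parts $\LL_{\psi(\beta\zeta^d s^d)}$ presupposes $p\nmid d-1$: otherwise the critical-point equation $dx^{d-1}=-t$ is inseparable, $\mu_{d-1}(\bar k)$ has fewer than $d-1$ elements, and the $I_\infty$-representation is not a direct sum of characters after a tame Kummer pullback. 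This already fails for $p=2$, $d=3$. So as written the geometric half also has a gap in the cases $p\mid d-1$, which are among the most relevant ones for the paper ($p=2$, $d$ odd).
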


\begin{proof}
 The determinant $\det {\mathcal F}_d(1)$ is a smooth rank one sheaf on $\AAA^1_k$, which is geometrically trivial by \cite[Theorem 17]{katz-monodromy}. So it is of the form $\alpha^{deg}$ for some $\ell$-adic unit $\alpha$. Then $\det {\mathcal F}_d(1)$ will be arithmetically of finite order over $\AAA^1_{k}$ if and only if $\alpha$ is a root of unity. In order to prove this, we will explicitely evaluate the action of Frobenius on $\det {\mathcal F}_d(1)$ at $t=0$, which is equal to $\alpha$. By replacing $k$ with a finite extension if necessary, we may assume that $d|q-1$, where $q=\# k$.

 Let $r\geq 1$, and let $k_r$ be the extension of $k$ of degree $r$ inside a fixed algebraic closure $\bar k$. The trace of the action of Frobenius on ${\mathcal F}_d(1)$ at $t=0\in k_r$ is given by
$$
\frac{1}{q^{r/2}}\sum_{x\in k_r}\psi_r(x^d)
$$
where $\psi_r(x)=\psi(\mathrm{Tr}_{k_r/\Fp}(x))$. We have
$$
S_r:=\sum_{x\in k_r}\psi_r(x^d)=\sum_{u\in k_r}\psi_r(u)\cdot\#\{x^d=u\}
=\sum_{u\in k_r}\psi_r(u)\sum_{\chi^d={\mathbf 1}}\chi(u)
$$
where the inner sum is taken over the set of multiplicative characters of $k_r$ with trivial $d$-th power. Since $d|q-1$, every such character is obtained, by composition with the norm map, from one such multiplicative character of $k$. Then we have
$$
\sum_{u\in k_r}\psi_r(u)\sum_{\chi^d={\mathbf 1}}\chi(u)
=\sum_{\chi^d={\mathbf 1}}\sum_{u\in k_r}\psi_r(u)\chi(u)=-\sum_{\chi\neq{\mathbf 1};\chi^d={\mathbf 1}}G(\psi_r,\chi)
$$
where
$$
G(\psi_r,\chi)=-\sum_{u\in k'_r}\psi_r(u)\chi(u)
$$
is the Gauss sum associated to $\chi$. Using the Hasse-Davenport relation $G(\psi_r,\chi)=G(\psi_1,\chi)^r$, we deduce
$$
\exp\sum_{r\geq 1} S_r\frac{T^r}{r}=\exp\left(-\sum_{r\geq 1}\sum_{\chi\neq{\mathbf 1};\chi^d={\mathbf 1}}\frac{(G(\psi_1,\chi)T)^r}{r}\right)=
$$
$$
=\prod_{\chi\neq{\mathbf 1};\chi^d={\mathbf 1}}\exp\left(-\sum_{r\geq 1}\frac{(G(\psi_1,\chi)T)^r}{r}\right)=\prod_{\chi\neq{\mathbf 1};\chi^d={\mathbf 1}}(1-G(\psi_1,\chi)T)
$$

So the Frobenius eigenvalues at $t=0\in k$ are $G(\psi_1,\chi)$ for the $d-1$ non-trivial multiplicative characters $\chi$ of $k$ such that $\chi^d={\mathbf 1}$. The determinant is then the product of these Gauss sums. Using the well-known relation $G(\psi_1,\chi)G(\psi_1,\chi^{-1})=\chi(-1)q$ we see that this product is $\pm q^{(d-1)/2}$ if $d$ is odd, or $\pm q^{(d-2)/2}G(\psi_1,\rho)$ if $d$ is even, where in the latter case $\rho$ denotes the unique order $2$ multiplicative character. Since $G(\psi_1,\rho)^2=G(\psi_1,\rho)G(\psi_1,\rho^{-1})=\rho(-1)q$, in both cases the product is $q^{(d-1)/2}$ times a root of unity. So Frobenius acts on $\det {\mathcal F}_d(1)=(\det {\mathcal F}_d)(d-1)$ by multiplication by a root of unity.
\end{proof}

The following corollary is simply a restatement of \cite[Theorem 8.14.4]{katz1990esa}:

\begin{cor}
 The sheaf ${\mathcal F}_d(1)$ has finite arithmetic monodromy if and only if it has finite geometric monodromy, if and only if for every finite extension $k_r$ of $k$ and every $t\in k_r$, the trace of the action of Frobenius on ${\mathcal F}_d(1)$ at $t$ is an algebraic integer.
\end{cor}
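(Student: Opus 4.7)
The plan is to invoke Katz's Theorem 8.14.4 in \cite{katz1990esa} directly. That theorem asserts that for a lisse $\Ql$-sheaf on a smooth curve over a finite field which is pure of weight $0$ and whose arithmetic determinant is of finite order, the three conditions ``finite arithmetic monodromy'', ``finite geometric monodromy'' and ``all Frobenius traces at all closed points are algebraic integers'' are mutually equivalent. The corollary is the specialization of this statement to the sheaf ${\mathcal F}_d(1)$, so the work reduces to checking the two standing hypotheses of Katz's theorem.

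First I would verify purity of weight $0$. The Artin--Schreier sheaf $\LL_{\psi(t^d)}$ is lisse and pure of weight $0$ on $\AAA^1_k$, so by Laumon's purity theorem for the geometric Fourier transform the sheaf ${\mathcal F}_d$ is pure, and the Tate twist $(1)$ normalizes it so that ${\mathcal F}_d(1)$ is pure of weight $0$. This is consistent with the Weil bound $|\sum_{x\in k_r}\psi_r(x^d+tx)|\leq (d-1)q^{r/2}$ and with the factor $q^{-r/2}$ already appearing in the trace formula displayed in the proof of the preceding lemma. Second, the arithmetic determinant of ${\mathcal F}_d(1)$ is of finite order: this is precisely what the preceding lemma proves.

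With both hypotheses established, Katz's Theorem 8.14.4 applies verbatim and yields the desired three-way equivalence. Since the corollary explicitly advertises itself as a restatement, no substantial obstacle arises; the only mildly delicate point is to match conventions on weights and Tate twists with those used in \cite{katz1990esa}, after which the proof is a one-line citation.
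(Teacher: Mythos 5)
Your proposal is correct and follows exactly the paper's route: the paper proves the corollary by citing \cite[Theorem 8.14.4]{katz1990esa} verbatim, with the preceding lemma supplying the finite-order determinant hypothesis just as you note. Your additional verification of purity via Laumon's theorem for the Fourier transform is a welcome explicit check of a hypothesis the paper leaves implicit, but it is not a different argument.
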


The last condition is equivalent to the trace of the action of Frobenius on ${\mathcal F}_d$ being a multiple of $\sqrt{q}$ as an algebraic integer. Let us spell out what this means explicitely:

\begin{cor}
 The sheaf ${\mathcal F}_d$ has finite geometric monodromy if and only if for every $r\geq 1$ and every $t\in k_r$, $\sum_{x\in k_r}\psi_r(x^d+tx)$ is divisible by $p^{r/2}$ as an algebraic integer.
\end{cor}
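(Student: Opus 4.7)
The plan is to obtain this corollary as a direct reformulation of the preceding one, after spelling out the trace of Frobenius on $\mathcal F_d(1)$ as a normalized exponential sum.

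First, I would note that Tate twisting does not alter the geometric monodromy group, so by Corollary 2 it suffices to translate the condition that $\Tr(\mathrm{Frob}_t\mid(\mathcal F_d(1))_{\bar t})$ be an algebraic integer, for every $t\in k_r$ and every $r\ge 1$, into an explicit numerical statement about the exponential sums appearing in the corollary.

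Next, I would invoke the construction of $\mathcal F_d$ as the Fourier transform of $\LL_{\psi(x^d)}$ together with the Grothendieck--Lefschetz trace formula, which for any $t\in k_r$ yields
$$
\Tr(\mathrm{Frob}_t\mid(\mathcal F_d)_{\bar t}) \;=\; \pm\sum_{x\in k_r}\psi_r(x^d+tx),
$$
the sign being fixed by the Fourier normalization used in the paper. The Tate twist $(1)$ then divides this by $p^{r/2}$, in agreement with the special case $t=0$ already computed in the proof of Lemma 1. Consequently, the trace of Frobenius on $\mathcal F_d(1)$ at $t$ is an algebraic integer precisely when $p^{r/2}$ divides $\sum_{x\in k_r}\psi_r(x^d+tx)$ in the ring of algebraic integers, which is the condition in the statement.

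The only point requiring any care, rather than a genuine obstacle, is the bookkeeping of signs and of the half-Tate-twist convention implicit in the notation $\mathcal F_d(1)$; both are already pinned down by the $t=0$ computation carried out in Lemma 1, so no additional input is needed to conclude.
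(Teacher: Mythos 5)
Your argument is correct and is essentially the paper's own (the paper gives no separate proof, treating the corollary as an immediate restatement of the previous one via the identity $\Tr(\mathrm{Frob}_t\mid(\mathcal F_d(1))_{\bar t})=\pm p^{-r/2}\sum_{x\in k_r}\psi_r(x^d+tx)$, exactly as you spell out). Your handling of the sign and of the half-Tate-twist normalization matches the convention the paper fixes in the $t=0$ computation of Lemma 1, so nothing is missing.
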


For our purposes we will need the following equivalent statement:

\begin{proposition}\label{crit1}
 The sheaf ${\mathcal F}_d$ has finite geometric monodromy if and only if for every $r\geq 1$ the sum $\sum_{x\in k_r}\psi_r(x^d)$ is divisible by $p^{r/2}$ as an algebraic integer and, for every non-trivial multiplicative character $\chi:k_r^\times\to\CC^\times$, $G_r(\chi)\cdot \sum_{x\in k_r^\times}\psi_r(x^d)\bar\chi(x)$ is divisible by $p^{r/2}$ as an algebraic integer. It is sufficient that the condition holds for every $r$ which is a multiple of a certain $r_0\geq 1$.
\end{proposition}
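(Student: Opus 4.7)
The strategy is to derive the criterion from the preceding corollary by performing a multiplicative Fourier expansion of
\[
S_r(t):=\sum_{x\in k_r}\psi_r(x^d+tx)
\]
in the variable $t$, for $t\in k_r^\times$. The value at $t=0$ is exactly the first condition $p^{r/2}\mid S_r(0)$, so the remaining task is to translate the condition $p^{r/2}\mid S_r(t)$ for $t\in k_r^\times$ into a divisibility condition on the character sums
\[
\beta_\chi:=G_r(\chi)\sum_{x\in k_r^\times}\psi_r(x^d)\bar\chi(x).
\]

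The main computational step is the inverse multiplicative Fourier formula on $k_r^\times$, which expresses
\[
\psi_r(y) \;=\; \frac{\pm 1}{q^r-1}\sum_{\chi}G_r(\chi)\bar\chi(y),\qquad y\in k_r^\times,
\]
with a sign depending on the convention used for $G_r(\chi)$ in the previous lemma; this is a direct consequence of the orthogonality relations for multiplicative characters of $k_r^\times$. I would split $S_r(t)=1+\sum_{x\in k_r^\times}\psi_r(x^d)\psi_r(tx)$, apply this expansion with $y=tx$, swap the order of summation, and isolate the trivial-character contribution (which involves only $S_r(0)$) to obtain an identity of the shape
\[
(q^r-1)\,S_r(t) \;=\; a\cdot S_r(0)+b+\sum_{\chi\neq\mathbf 1}\bar\chi(t)\,\beta_\chi,
\]
with explicit integer constants $a,b$.

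The key arithmetic observation is that $q^r-1$ is coprime to $p$, so multiplying or dividing by it preserves $p^{r/2}$-divisibility. Combined with the assumed divisibility of $S_r(0)$, the identity above shows that $p^{r/2}\mid S_r(t)$ for all $t\in k_r^\times$ is equivalent to $p^{r/2}$-divisibility of the function $f(t):=\sum_{\chi\neq\mathbf 1}\bar\chi(t)\beta_\chi$ on $k_r^\times$. A second application of orthogonality inverts this relation, writing each $\beta_\chi$ (for $\chi\neq\mathbf 1$) as a $\ZZ[1/(q^r-1)]$-linear combination of the values $\{f(t)\}_{t\in k_r^\times}$; this yields the equivalence between $p^{r/2}\mid f(t)$ for all $t$ and $p^{r/2}\mid\beta_\chi$ for all $\chi\neq\mathbf 1$, producing the second condition in the statement.

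For the final sentence, I would invoke the fact that the geometric monodromy of $\mathcal F_d$ is invariant under base change to any finite extension $k_{r_0}/k$. Applying the already-established equivalence to the pull-back of $\mathcal F_d$ to $\AAA^1_{k_{r_0}}$ yields, over $k_{r_0}$, the two conditions for all $r'\geq 1$; transcribed back over $k=\Fp$ these are precisely the two conditions restricted to those $r$ that lie in $r_0\ZZ_{>0}$. I do not expect any serious obstacle: the only delicate bookkeeping is in tracking the sign convention for $G_r(\chi)$ and the trivial-character contribution so that the constants $a,b$ above match the formulas already used in the proof of the previous lemma.
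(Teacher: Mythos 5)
Your proposal is correct and follows essentially the same route as the paper: both directions are the multiplicative Fourier analysis of $t\mapsto\sum_x\psi_r(x^d+tx)$ on $k_r^\times$, identifying the $\chi$-th Fourier coefficient with $\pm G_r(\chi)\sum_x\psi_r(x^d)\bar\chi(x)$, using orthogonality (and that $q^r-1$ is prime to $p$) to invert, and handling the final sentence by invariance of geometric monodromy under finite base extension. The only difference is cosmetic: the paper computes $\sum_t\chi(t)S_r(t)$ directly for the forward implication and invokes Fourier inversion for the converse, whereas you package both into a single explicit identity and invert twice.
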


\begin{proof}
 This is an explicit version of \cite[Theorem 8.14.6]{katz1990esa}. Suppose that for every $r\geq 1$ and every $t\in k_r$, $\sum_{x\in k_r}\psi_r(x^d+tx)$ is divisible by $p^{r/2}$ as an algebraic integer. Then, in particular, $\sum_{x\in k_r}\psi_r(x^d)$ is divisible by $p^{r/2}$. Furthermore, for every non-trivial multiplicative character $\chi:k_r^\times\to\CC^\times$, the sum
$$
\sum_{t\in k_r^\times}\chi(t)\sum_{x\in k_r}\psi_r(x^d+tx)=\sum_{x\in k_r}\psi_r(x^d)\sum_{t\in k_r^\times}\chi(t)\psi_r(tx)=-G_r(\chi)\cdot \sum_{x\in k_r^\times}\psi_r(x^d)\bar\chi(x)
$$
is also divisible by $p^{r/2}$.

 Conversely, if $\sum_{x\in k_r^\times}\psi_r(x^d)$ is divisible by $p^{r/2}$, so is 
$$\sum_{t\in k^\times}\sum_{x\in k_r^\times}\psi_r(x^d+tx)=\sum_{t\in k}\sum_{x\in k_r^\times}\psi_r(x^d+tx)-\sum_{x\in k_r^\times}\psi_r(x^d)=p^r-\sum_{x\in k_r^\times}\psi_r(x^d)
$$

Since $\sum_{t\in k_r^\times}\chi(t)\sum_{x\in k_r}\psi_r(x^d+tx)$ is also divisible by $p^{r/2}$ for every non-trivial $\chi:k_r^\times\to\CC^\times$, by Fourier inversion $\sum_{x\in k_r}\psi_r(x^d+tx)$ is divisible by $p^{r/2}$ for every $t\in k_r^\times$.

The last statement is a consequence of the fact that having finite geometric monodromy is invariant under extension of scalars to a finite extension of the base field.
\end{proof}

\begin{lemma}
 Let $z\in k_r^\times$ and $\chi:k_r^\times\to\CC^\times$ be a multiplicative character. Then
$$
\sum_{x|x^d=z}\chi(x)=\sum_{\eta|\eta^d=\chi}\eta(z).
$$
\end{lemma}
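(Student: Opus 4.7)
The plan is to prove the identity by a symmetric case analysis that exploits the Pontryagin duality between the $d$-th power map $\pi:k_r^\times \to k_r^\times$, $x\mapsto x^d$, and its counterpart $\hat\pi:\eta\mapsto \eta^d$ on the character group. Setting $N=|k_r^\times|$ and $e=\gcd(d,N)$, the map $\pi$ has kernel $\mu_d$ (the $d$-th roots of unity in $k_r^\times$, of order $e$) and image $(k_r^\times)^d$ of index $e$; dually, $\hat\pi$ has kernel of order $e$, and a character $\chi$ lies in its image if and only if $\chi|_{\mu_d}={\mathbf 1}$, by cyclicity of $k_r^\times$. My aim is to show that both sides of the identity vanish unless both $z\in(k_r^\times)^d$ and $\chi=\eta_0^d$ for some character $\eta_0$, in which case they both equal $e\cdot\eta_0(z)$.

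For the left hand side I would split on whether $z$ is a $d$-th power. If $z\notin(k_r^\times)^d$ the sum is empty. Otherwise pick $y$ with $z=y^d$; then $\{x:x^d=z\}=y\mu_d$, so the left hand side factors as $\chi(y)\sum_{\zeta\in\mu_d}\chi(\zeta)$, and the inner sum is $e$ if $\chi|_{\mu_d}={\mathbf 1}$ (equivalently $\chi=\eta_0^d$ for some $\eta_0$, in which case $\chi(y)=\eta_0(y^d)=\eta_0(z)$) and $0$ otherwise. Symmetrically, for the right hand side I would condition on whether $\chi$ is a $d$-th power character. If it is not, the sum is empty. If $\chi=\eta_0^d$, then the fiber $\{\eta:\eta^d=\chi\}$ is the coset $\eta_0\cdot\ker\hat\pi$, so the right hand side equals $\eta_0(z)\sum_{\tau^d={\mathbf 1}}\tau(z)$; the inner sum runs over the character group of the quotient $k_r^\times/(k_r^\times)^d$ and therefore equals $e$ or $0$ depending on whether $z\in(k_r^\times)^d$ or not. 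Matching the two computations yields the claimed equality in every case.

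The argument is not computationally subtle; the main care is in keeping the two dual facts straight, namely that ``$\chi$ is a $d$-th power character'' is equivalent to $\chi|_{\mu_d}={\mathbf 1}$, and ``$z$ is a $d$-th power'' is equivalent to $\tau(z)=1$ for every character $\tau$ with $\tau^d={\mathbf 1}$. Both equivalences follow immediately from the cyclic structure of $k_r^\times$, so no real obstacle arises.
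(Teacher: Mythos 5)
Your proof is correct, and it follows the same overall strategy as the paper: a four-way case split according to whether $z$ is a $d$-th power in $k_r^\times$ and whether $\chi$ is a $d$-th power in the character group, with both sides vanishing unless both conditions hold, in which case both equal $\gcd(d,p^r-1)\cdot\eta_0(z)$. The difference is in execution. The paper fixes a generator $z_0$ of $k_r^\times$ and a generator $\chi_0$ of the character group, writes $z=z_0^a$, $\chi=\chi_0^b$, and verifies each case by summing an explicit geometric series in the primitive root of unity $\chi_0(z_0)$. You instead describe the fibers of $x\mapsto x^d$ and $\eta\mapsto\eta^d$ as cosets of $\mu_d$ and of $\ker\hat\pi$ respectively, and reduce the inner sums to orthogonality of characters on the subgroup $\mu_d$ and on the quotient $k_r^\times/(k_r^\times)^d$. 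This is cleaner and coordinate-free; the only point where cyclicity of $k_r^\times$ is genuinely used is in the two duality statements you isolate at the end (that being a $d$-th power character is equivalent to triviality on $\mu_d$, and that being a $d$-th power element is equivalent to being killed by all $\tau$ with $\tau^d=\mathbf{1}$), and you justify those correctly. The paper's generator computation makes the same facts visible by brute force. Either route is fine; yours generalizes more readily and avoids the slightly error-prone bookkeeping with exponents modulo $p^r-1$.
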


\begin{proof}
 Let $z_0$ and $\chi_0$ be generators of the multiplicative cyclic groups $k^\times$ and $\widehat{k^\times}$ respectively. Let $z=z_0^a$ and $\chi=\chi_0^b$ with $0\leq a,b\leq p^r-2$, and let $s=\gcd(d,p^r-1)$. Then the equation $x^d=z$ (respectively $\eta^d=\chi$) has solutions if and only if $s|a$ (resp. $s|b$), in which case it has exactly $s$ solutions, given by $x_0z_0^{(p^r-1)i/s}$ (resp. $\eta_0\chi_0^{(p^r-1)i/s}$) for $i=0,1,\ldots,s-1$, where $x_0$ (resp. $\eta_0$) is a particular solution.

If $s\not |a$ and $s\not | b$ then both sides of the equation are zero. Suppose that $s|a$ and $s\not | b$. Then
$$
\sum_{x|x^d=z}\chi(x)=\sum_{i=0}^{s-1}\chi(x_0z_0^{(p^r-1)i/s})=\chi(x_0)\sum_{i=0}^{s-1}\left(\chi_0(z_0)^{(p^r-1)b/s}\right)^i
$$
Since $\chi_0(z_0)$ is a primitive $(p^r-1)$-th root of unity and $s$ does not divide $b$, $\chi_0(z_0)^{(p^r-1)b/s}\neq 1$, so
$$
\chi(x_0)\sum_{i=0}^{s-1}\left(\chi_0(z_0)^{(p^r-1)b/s}\right)^i=\chi(x_0)\frac{\chi_0(z_0)^{(p^r-1)b}-1}{\chi_0(z_0)^{(p^r-1)b/s}-1}=0
$$
and the equality is true in this case. Dually, it is also true if $s|a$ and $s\not |b$.

Suppose now that $s|a$ and $s|b$. Then the left hand side of the equation is
$$
\sum_{x|x^d=z}\chi(x)=\chi(x_0)\sum_{i=0}^{s-1}\left(\chi_0(z_0)^{(p^r-1)b/s}\right)^i=s\cdot\chi(x_0)
$$
and the right hand side is
$$
\sum_{\eta|\eta^d=\chi}\eta(z)=\sum_{i=0}^{s-1}\eta_0(z)\chi_0^{(p^r-1)i/s}(z)=\eta_0(z)\sum_{i=0}^{s-1}\left(\chi_0(z_0)^{(p^r-1)a/s}\right)^i=s\cdot\eta_0(z)
$$
We conclude by noticing that
$$
\chi(x_0)=\eta_0^d(x_0)=\eta_0(x_0^d)=\eta_0(z).
$$
\end{proof}

Using this lemma, we get
$$
\sum_{x\in k_r^\times}\psi_r(x^d)\bar\chi(x)=\sum_{z\in k_r^\times}\psi_r(z)\sum_{x^d=z}\bar\chi(x)=\sum_{z\in k_r^\times}\psi_r(z)\sum_{\eta^d=\chi}\bar\eta(z)=-\sum_{\eta^d=\chi}G_r(\bar\eta)
$$
and
$$
\sum_{x\in k_r}\psi_r(x^d)=1+\sum_{z\in k_r^\times}\psi_r(z)\#\{x|x^d=z\}=1+\sum_{z\in k_r^\times}\psi_r(z)\sum_{\eta^d={\mathbf 1}}\eta(z)=
$$
$$
=1+\sum_{\eta^d={\mathbf 1}}\sum_{z\in k_r^\times}\psi_r(z)\eta(z)=-\sum_{\eta\neq{\mathbf 1},\eta^d={\mathbf 1}}G_r(\eta)
$$
This allows to give yet another criterion for finite monodromy:
\begin{proposition}\label{prop2}
 The sheaf ${\mathcal F}_d$ has finite geometric monodromy if and only if for every $r\geq 1$ and every non-trivial multiplicative character $\eta:k_r^\times\to\CC^\times$, the Gauss sum $G_r(\eta)$ is divisible by $p^{r/2}$ if $\eta^d$ is trivial, and the product $G_r(\eta)G_r(\bar\eta^d)$ is divisible by $p^{r/2}$ is $\eta^d$ is non-trivial. It is sufficient that the condition holds for every $r$ which is a multiple of a certain $r_0\geq 1$.
\end{proposition}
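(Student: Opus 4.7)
My plan is to apply Proposition~\ref{crit1} directly, using the two identities
\[
\sum_{x\in k_r}\psi_r(x^d)=-\sum_{\eta\neq{\mathbf 1},\,\eta^d={\mathbf 1}}G_r(\eta)\qquad\text{and}\qquad \sum_{x\in k_r^\times}\psi_r(x^d)\bar\chi(x)=-\sum_{\eta^d=\chi}G_r(\bar\eta)
\]
derived immediately before the statement. Under these substitutions, the two conditions of Proposition~\ref{crit1} become: for every $r$ (or every $r$ in a fixed arithmetic progression), $\sum_{\eta\neq{\mathbf 1},\,\eta^d={\mathbf 1}}G_r(\eta)\equiv 0\pmod{p^{r/2}}$, and, for every non-trivial $\chi$, $G_r(\chi)\sum_{\eta^d=\chi}G_r(\bar\eta)\equiv 0\pmod{p^{r/2}}$. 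What remains is to show these sum-divisibility statements are equivalent to the term-by-term conditions of Proposition~\ref{prop2}.

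The implication Proposition~\ref{prop2} $\Rightarrow$ Proposition~\ref{crit1} is immediate: the first sum is a sum of quantities each divisible by $p^{r/2}$ by hypothesis, and in the second, each summand $G_r(\chi)G_r(\bar\eta)$ (with $\eta^d=\chi\neq{\mathbf 1}$) can be rewritten as $G_r(\bar\eta)\cdot G_r(\overline{\bar\eta^d})$, which is divisible by $p^{r/2}$ by the hypothesis applied to the character $\bar\eta$ (whose $d$-th power $\bar\chi$ is non-trivial).

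The converse direction is the main obstacle: we must extract individual divisibility from divisibility of a sum. My plan is to exploit the ``multiple of $r_0$'' clause. Fix $r_0$ with $d\mid p^{r_0}-1$ so that, for every $m\geq 1$, every $d$-torsion character of $\widehat{k_{mr_0}^\times}$ is the pullback $\eta\circ N_{k_{mr_0}/k_{r_0}}$ of some character of $\widehat{k_{r_0}^\times}$. The Hasse--Davenport relation $G_{mr_0}(\eta\circ N)=(-1)^{m-1}G_{r_0}(\eta)^m$ then converts the first sum condition at every level $mr_0$ into the family of power-sum relations
\[
\sum_{\eta\neq{\mathbf 1},\,\eta^d={\mathbf 1}}\bigl(-G_{r_0}(\eta)/p^{r_0/2}\bigr)^m\in\bar\ZZ\qquad (m\geq 1),
\]
and analogously for the conditions involving non-trivial $\chi$ (where the factor $G_{r_0}(\chi)/p^{r_0/2}$ behaves well under Hasse--Davenport in the same way). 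The delicate point will be the passage from power-sum integrality to individual integrality of the algebraic numbers $G_{r_0}(\eta)/p^{r_0/2}$ (each of archimedean absolute value $1$): one expects to conclude via a generating-function argument on $\prod_\eta\bigl(1-(G_{r_0}(\eta)/p^{r_0/2})T\bigr)$, but this cannot proceed naively through Newton's identities because of the $p$-power denominators they introduce, so a Dwork--Dieudonn\'e style $p$-adic argument will be required.
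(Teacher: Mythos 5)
Your proposal follows essentially the same route as the paper: reduce via Proposition~\ref{crit1} and the two Gauss-sum identities, verify the easy direction termwise, and for the converse pass to an extension with $d\mid p^{r_0}-1$ so that Hasse--Davenport converts the sum conditions into power-sum conditions. The step you defer to a ``Dwork--Dieudonn\'e style $p$-adic argument'' is exactly the paper's standalone lemma (after Ax): the series $\sum_{s}(\alpha_1^s+\cdots+\alpha_d^s)T^s=\sum_i(1-\alpha_iT)^{-1}$ converges for $|T|_p<p^{1/2}$, so its poles $\alpha_i^{-1}$ satisfy $|\alpha_i^{-1}|_p\geq p^{1/2}$, yielding the individual divisibilities.
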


\begin{proof}
 By proposition \ref{crit1} and the previous remark, if these products of Gauss sums are divisible by $p^{r/2}$ then ${\mathcal F}_d$ has finite monodromy. Conversely, suppose that ${\mathcal F}_d$ has finite monodromy. Then for every $r\geq 1$ and every non-trivial $\chi:k_r^\times\to\CC^\times$, the sums $A_r=\sum_{\eta\neq{\mathbf 1},\eta^d={\mathbf 1}}G_r(\eta)$ and $B_r(\chi)=\sum_{\eta^d=\chi}G_r(\chi)G_r(\bar\eta)$ are divisible by $p^{r/2}$ as algebraic integers. We need to show that the individual summands are also divisible by $p^{r/2}$. By the Hasse-Davenport relation, by passing to a finite extension of $k_r$ we may assume that $d|p^r-1$. Then for every $m\geq 1$ there are either $0$ (in which case there is nothing to prove) or $d$ characters $\eta$ of $k_{rm}^\times$ such that $\eta^d=\chi$, which are obtained from those of $k_r^\times$ by composition with the norm map. 
  By the Hasse-Davenport relation we have that $A_{rs}=\pm\sum_{\eta\neq{\mathbf 1},\eta^d={\mathbf 1}}G_r(\eta)^s$ and $B_{rs}(\chi)=\sum_{\eta^d=\chi}G_r(\chi)^sG_r(\bar\eta)^s$ are divisible by $p^{rs/2}$ as algebraic integers. The result is then a consequence of the following lemma.
\end{proof}

\begin{lemma}
 Let $\alpha_1,\ldots,\alpha_d$ be algebraic integers such that $\alpha_1^s+\cdots+\alpha_d^s$ is divisible by $p^{s/2}$ for every $s\geq 1$. Then $\alpha_i$ is divisible by $p^{1/2}$ for every $i=1,\ldots,d$. 
\end{lemma}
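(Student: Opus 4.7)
The statement is about divisibility in the algebraic integers; being divisible by $p^{1/2}$ means $v(\alpha_i) \geq 1/2$ for every valuation $v$ of $\bar{\mathbb{Q}}$ extending the $p$-adic one, normalized so that $v(p) = 1$. The plan is to fix one such valuation (equivalently, an embedding $\bar{\mathbb{Q}} \hookrightarrow \bar{\mathbb{Q}}_p$) and extract the bound from a generating-function argument, so that the proof localizes to a single prime above $p$.

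I would form the formal power series
$$
h(T) = \sum_{s \geq 1}(\alpha_1^s + \cdots + \alpha_d^s)\, T^s \in \bar{\mathbb{Q}}[[T]].
$$
The hypothesis bounds each coefficient by $|\alpha_1^s + \cdots + \alpha_d^s|_p \leq p^{-s/2}$, so $|h_s T^s|_p \leq (|T|_p/p^{1/2})^s$, and hence $h$ converges $p$-adically on the open disk $\{|T|_p < p^{1/2}\}$. On the other hand, exchanging the two summations and summing a geometric series in each $\alpha_i$ yields the formal identity
$$
h(T) = \sum_{i=1}^{d} \frac{\alpha_i T}{1 - \alpha_i T},
$$
presenting $h$ as a rational function whose only poles lie at $T = 1/\alpha_i$ for nonzero $\alpha_i$ (a vanishing $\alpha_i$ contributing nothing). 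Invoking the $p$-adic principle that the Taylor expansion at $0$ of a rational function converges precisely on the largest open disk around $0$ avoiding all its poles, one concludes $|1/\alpha_i|_p \geq p^{1/2}$, i.e., $v(\alpha_i) \geq 1/2$, for every $i$, which is the claim.

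The main obstacle is justifying this $p$-adic principle when the summed rational function may have cancelling contributions. The cleanest route is via partial fractions: group the $\alpha_i$ by the distinct values $\beta_1, \ldots, \beta_N$ of $1/\alpha_i$ and write $h(T) = \sum_{j,m} c_{j,m}/(T-\beta_j)^m$; expand each term as a geometric series in $T$; and observe that the $T^s$-coefficient contributed by the block at $\beta_j$ equals $|\beta_j|_p^{-s-m}$ times a rational integer (hence of $p$-adic size at most $|\beta_j|_p^{-s-m}$). If some $\beta_j$ satisfied $|\beta_j|_p < p^{1/2}$, then at $T = \beta_j$ the contributions from all other blocks would converge by the ultrametric inequality, while those from $\beta_j$ itself would have individual terms of $p$-adic absolute value bounded away from $0$, forcing divergence at $T = \beta_j$ and contradicting the convergence of $h$ on $\{|T|_p < p^{1/2}\}$ established above.
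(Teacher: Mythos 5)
Your proposal is correct and follows essentially the same route as the paper: form the generating function $\sum_s(\alpha_1^s+\cdots+\alpha_d^s)T^s$, note it converges $p$-adically on $\{|T|_p<p^{1/2}\}$, identify it with the rational function $\sum_i 1/(1-\alpha_i T)$ (up to an additive constant), and conclude that its poles $\alpha_i^{-1}$ must lie outside that disk. Your extra partial-fractions paragraph merely makes explicit the step the paper takes for granted (that no pole is removable, since the residue at $1/\alpha_i$ is a nonzero integer multiple of $\beta_j$), which is a welcome but not essentially different elaboration.
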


\begin{proof}
 This is a well known result, see eg. \cite{ax1964}. Let $K$ be the completion of $\mathbb Q(\alpha_1,\ldots,\alpha_d)$ on a prime over $p$. Since $\alpha_1^s+\cdots+\alpha_d^s$ is divisible by $p^{s/2}$, the power series
 $$
 g(T):=\sum_{s=0}^\infty(\alpha_1^s+\cdots+\alpha_d^s)T^s
 $$
 converges for $|T|_p < p^{1/2}$, so all its poles $x$ must have $|x|_p\geq p^{1/2}$, that is, $(p^{1/2}x)^{-1}$ are $p$-adic integers. But the poles are $\alpha_1^{-1},\ldots,\alpha_d^{-1}$, since
 $$
 g(T)=\frac{1}{1-\alpha_1 T}+\cdots+\frac{1}{1-\alpha_d T}.
 $$
 So $(p^{1/2}\alpha_i^{-1})^{-1}=p^{-1/2}\alpha_i$ is an algebraic integer for all $i=1,\ldots,d$, that is, $\alpha_i$ is divisible by $p^{1/2}$.
\end{proof}

Finally we can make this more explicit thanks to Stickelberger's theorem. Fix $r\geq 1$. For every integer $1\leq x\leq p^r-1$ let $[x]_{p,r}$ be the sum of the $p$-adic digits of $x$. It is an integer between $1$ and $r(p-1)$: for instance, $[1]_{p,r}=[p]_{p,r}=1$, and $[p^r-1]_{p,r}=r(p-1)$. If $x$ is an arbitrary integer, we define $[x]_{p,r}:=[y]_{p,r}$, where $1\leq y\leq p^r-1$ is the unique integer such that $x\equiv y$ (mod $p^r-1$).

It is easy to see from this definition that $[px]_{p,r}=[x]_{p,r}$ and $[-x]_{p,r}=r(p-1)-[x]_{p,r}$ for every $x\in{\mathbb Z}$ which is not a multiple of $p^r-1$. If $x$ is not a multiple of $p^r-1$ we have the following well-known explicit formula for $[x]_{p,r}$, where $\{x\}$ denotes the fractional part of a real number $x$:
$$
[x]_{p,r}=(p-1)\sum_{i=0}^{r-1}\left\{\frac{p^ix}{p^r-1}\right\}.
$$

\begin{thm}\label{numcriterion}
 The sheaf ${\mathcal F}_d$ has finite geometric monodromy if and only if for every $r\geq 1$ and every integer $1\leq x\leq p^r-2$, we have
$$
[dx]_{p,r}\leq [x]_{p,r}+\frac{r(p-1)}{2}.
$$
It is sufficient that the condition holds for every $r$ which is a multiple of a certain $r_0\geq 1$.
\end{thm}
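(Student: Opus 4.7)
The plan is to apply Stickelberger's theorem on the $p$-adic valuation of Gauss sums to the criterion of Proposition \ref{prop2}, and then verify that the two cases there merge into the single stated inequality. Normalize the $p$-adic valuation $v_p$ on $\bar{\mathbb Q}_p$ so that $v_p(p)=1$, and let $\omega$ be the Teichmüller character of $k_r^\times$. Every non-trivial multiplicative character $\eta$ of $k_r^\times$ has the form $\eta=\omega^{-x}$ for a unique $1\leq x\leq p^r-2$, and Stickelberger's theorem (with $v_\mathfrak{P}(\zeta_p-1)=1$, so $v_\mathfrak{P}(p)=p-1$) gives
$$
v_p(G_r(\eta))=\frac{[x]_{p,r}}{p-1}.
$$
In particular the divisibility of $G_r(\eta)$ by $p^{r/2}$ amounts to $[x]_{p,r}\geq r(p-1)/2$.

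Next I would treat the case where $\eta^d$ is non-trivial, so $dx\not\equiv 0\pmod{p^r-1}$. Then $\bar\eta^d=\omega^{dx}$, and using the identity $[-dx]_{p,r}=r(p-1)-[dx]_{p,r}$ gives $v_p(G_r(\bar\eta^d))=r-[dx]_{p,r}/(p-1)$. The required divisibility $v_p(G_r(\eta)G_r(\bar\eta^d))\geq r/2$ becomes, after clearing the factor $p-1$, exactly
$$
[dx]_{p,r}\leq [x]_{p,r}+\frac{r(p-1)}{2}.
$$

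Finally, when $\eta^d$ is trivial, i.e.\ $(p^r-1)\mid dx$, the convention $[dx]_{p,r}=r(p-1)$ reduces the same displayed inequality to $[x]_{p,r}\geq r(p-1)/2$, which is precisely the condition from Proposition \ref{prop2} in that case. Thus both halves of Proposition \ref{prop2} merge into the single numerical inequality over all $1\leq x\leq p^r-2$, and the last sentence about sufficiency for multiples of a fixed $r_0$ is inherited verbatim from the corresponding statement in Proposition \ref{prop2}. The only real care required is in tracking the Stickelberger normalization; the mild subtlety is noticing that the boundary case $\eta^d=\mathbf{1}$ is automatically absorbed into the common inequality thanks to the convention $[x]_{p,r}=r(p-1)$ when $(p^r-1)\mid x$.
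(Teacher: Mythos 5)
Your proof is correct and follows essentially the same route as the paper: apply Stickelberger's theorem to translate the Gauss-sum divisibilities of Proposition \ref{prop2} into digit-sum inequalities, use $[-dx]_{p,r}=r(p-1)-[dx]_{p,r}$ to rewrite the generic case, and observe that the convention $[dx]_{p,r}=r(p-1)$ when $(p^r-1)\mid dx$ absorbs the boundary case into the same inequality. The only cosmetic difference is your choice of normalization $\eta=\omega^{-x}$ versus the paper's $\omega^{j}$, which is immaterial since $x\mapsto -x$ permutes the nontrivial characters.
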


\begin{proof}
 Fix $r\geq 1$. The Gauss sums on $k_r^\times$ take values on the finite extension of $\mathbb Q$ generated by the $p(p^r-1)$-th roots of unity. By the Stickelberger theorem, if $\omega$ denotes the Teichmüller character of $k_r^\times$ (which generates the character group), the $p$-adic valuation of the Gauss sum associated to $\omega^{j}$ for $1\leq j\leq p^r-2$ is given by $\frac{1}{p-1}[j]_{p,r}$. 
 
 Applying this to the criterion of Proposition \ref{prop2}, we get that ${\mathcal F}_d$ has finite monodromy if and only if for every $1\leq j\leq p^r-2$ we have $[j]_{p,r}\geq\frac{r(p-1)}{2}$ if $dj$ is divisible by $p^r-1$, and $[j]_{p,r}+[-dj]_{p,r}\geq\frac{r(p-1)}{2}$ otherwise.
 
 If $dj$ is divisible by $p^r-1$ this can be rewritten as
 $$
 [dj]_{p,r}=r(p-1)\leq [j]_{p,r}+\frac{r(p-1)}{2}
 $$
 and, if $dj$ is not a multiple of $p-1$, it is equivalent to
 $$
 [dj]_{p,r}=r(p-1)-[-dj]_{p,r}\leq r(p-1)+[j]_{p,r}-\frac{r(p-1)}{2}=[j]_{p,r}+\frac{r(p-1)}{2}.
 $$
\end{proof}

For computational purposes, it is convenient to have a sufficient condition for the monodromy of ${\mathcal F}_d$ to be finite. We have the following criterion, which is a generalization of \cite[Lemma 13.5]{katz2007g2}.

\begin{proposition}
 For $r\geq 1$ an integer, let $f_r:[0,1]\to{\mathbb R}$ be the piecewise linear function defined by
 $$
 f_r(x)=\sum_{i=0}^{r-1}\{p^ix\}+\sum_{i=0}^{r-1}\{-dp^ix\}.
 $$
 Suppose that for some integer $r_0\geq 1$ we have
 \begin{enumerate}
  \item $f_{r_0}(\frac{a}{d})\geq\frac{r_0}{2}$ for $a=1,\ldots,d-1$
  \item $\lim_{x\to\frac{a}{p^{r_0-1}d}^{-}}f_{r_0}(x)\geq\frac{r_0}{2}$ for $a=1,\ldots,p^{r_0-1}d$
 \end{enumerate}
 Then the monodromy of ${\mathcal F}_d$ is finite.
\end{proposition}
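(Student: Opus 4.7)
The plan is to apply Theorem~\ref{numcriterion}. First, reformulate its hypothesis: using $[x]_{p,r}=(p-1)\sum_{i=0}^{r-1}\{p^ix/(p^r-1)\}$ together with $[-y]_{p,r}=r(p-1)-[y]_{p,r}$, the inequality $[dj]_{p,r}\leq[j]_{p,r}+r(p-1)/2$ for $1\leq j\leq p^r-2$ is equivalent to $f_r(j/(p^r-1))\geq r/2$. By the last sentence of Theorem~\ref{numcriterion}, it suffices to verify this whenever $r$ is a multiple of $r_0$.

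Next, I would establish the decomposition identity $f_r(x)=\sum_{k=0}^{r/r_0-1}f_{r_0}(\{p^{kr_0}x\})$ when $r_0\mid r$. This follows by writing $i=kr_0+l$ in the defining sum and applying $\{p^ly\}=\{p^l\{y\}\}$ and $\{-dp^ly\}=\{-dp^l\{y\}\}$, valid whenever $p^l$ is a positive integer. Consequently, if $f_{r_0}(y)\geq r_0/2$ for every $y$ of the form $j'/(p^r-1)$ with $1\leq j'\leq p^r-2$, then $f_r(j/(p^r-1))\geq(r/r_0)(r_0/2)=r/2$, because each $\{p^{kr_0}j/(p^r-1)\}$ is again of that form (its representative modulo $p^r-1$ is nonzero since $\gcd(p,p^r-1)=1$).

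The next step is to exploit the piecewise-linear structure of $f_{r_0}$. On any open interval avoiding the breakpoints, its slope equals $(1-d)(p^{r_0}-1)/(p-1)<0$, so $f_{r_0}$ is strictly decreasing between breakpoints; and since $\gcd(d,p)=1$, the breakpoints are precisely the multiples of $1/(dp^{r_0-1})$ in $[0,1]$. Given $x_0=j'/(p^r-1)$ with $1\leq j'\leq p^r-2$, split into two cases. If $x_0$ lies in the interior of some piece $(u,v)$, then strict decrease gives $f_{r_0}(x_0)\geq\lim_{t\to v^-}f_{r_0}(t)\geq r_0/2$ by hypothesis~(2) at the $a$ with $v=a/(dp^{r_0-1})$. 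If $x_0$ is a breakpoint, the discontinuity cannot come from any $\{p^ix\}$ (that would force $(p^r-1)\mid p^ij'$, impossible since $\gcd(p,p^r-1)=1$ and $0<j'<p^r-1$), so it comes from some $\{-dp^ix\}$, forcing $(p^r-1)\mid dj'$. Setting $e=\gcd(d,p^r-1)$, this gives $x_0=n/e=m/d$ for some $m\in\{1,\ldots,d-1\}$, and hypothesis~(1) yields $f_{r_0}(x_0)\geq r_0/2$.

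The main technical point is the breakpoint classification in the second case: confirming that among the rationals $j'/(p^r-1)$, the only points coinciding with breakpoints of $f_{r_0}$ are exactly the $m/d$ with $1\leq m\leq d-1$, so that hypothesis~(1) covers precisely these while hypothesis~(2) handles every non-breakpoint via the strict monotonicity of $f_{r_0}$ on each piece.
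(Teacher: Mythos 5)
Your proof is correct and follows essentially the same route as the paper: reduce to Theorem~\ref{numcriterion} via the identity $[j]_{p,r}+[-dj]_{p,r}=(p-1)f_r(j/(p^r-1))$, decompose $f_r$ as a sum of $r/r_0$ translates of $f_{r_0}$, and use the negative slope between breakpoints so that hypothesis (2) controls non-breakpoints and hypothesis (1) the breakpoints in ${\mathbb Z}_{(p)}$, which are exactly the $a/d$. You merely spell out in more detail the paper's one-line assertion that the two conditions force $f_{r_0}\geq r_0/2$ on ${\mathbb Z}_{(p)}\cap(0,1)$.
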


\begin{proof}
 Since the function $f_r$ is piecewise linear with constant negative slope and its points of discontinuity are $\frac{a}{2^{r-1}d}$ for $a=1,\ldots,p^{r-1}d$, the two conditions imply that $f_{r_0}(x)\geq\frac{r_0}{2}$ for every $x\in {\mathbb Z}_{(p)}\cap (0,1)$.
 
Let $r$ be a multiple of $r_0$, and $1\leq x\leq p^r-1$ an integer. Then
$$
f_r\left(\frac{x}{p^r-1}\right)
=f_{r_0}\left(\frac{x}{p^r-1}\right)+f_{r_0}\left(\frac{p^{r_0}x}{p^r-1}\right)+\cdots+f_{r_0}\left(\frac{p^{(r/r_0-1)r_0}x}{p^r-1}\right)\geq
$$
$$
\geq \frac{r}{r_0}\frac{r_0}{2}=\frac{r}{2}.
$$

Then, if $dx$ is a multiple of $p^r-1$,
$$
[x]_{p,r}=(p-1)\sum_{i=0}^{r-1}\left\{\frac{p^ix}{p^r-1}\right\}=(p-1)f_r\left(\frac{x}{p^r-1}\right)\geq(p-1)\frac{r}{2}.
$$
and otherwise,
$$
[x]_{p,r}+[-dx]_{p,r}=(p-1)f_r\left(\frac{x}{p^r-1}\right)\geq(p-1)\frac{r}{2}.
$$
so ${\mathcal F}_d$ has finite monodromy by Theorem \ref{numcriterion}.
\end{proof}

For instance, for $p=2$, $d=5$ satisfies the condition for $r=4$, as one can easily check.

\section{Explicit results}

In this section we will use Theorem \ref{numcriterion} to give some explicit results. First of all, we can recover the known fact \cite[Proposition 5]{katz-monodromy} that, if $p>2d-1\geq 5$, then the monodromy is not finite.

\begin{cor}
 Suppose that $p\geq 2d+1\geq 7$. Then ${\mathcal F}_d$ does not have finite monodromy.
\end{cor}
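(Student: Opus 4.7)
The plan is to apply Theorem \ref{numcriterion} with $r = 1$ and exhibit a single integer $x_0 \in [1, p-2]$ violating the required inequality. Since $[y]_{p,1} = y$ for every $1 \leq y \leq p-1$, it suffices to produce an integer $x_0$ with $dx_0 \leq p-1$ (so that $[dx_0]_{p,1} = dx_0$) and $(d-1)x_0 > (p-1)/2$.

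The natural candidate is $x_0 := \lfloor (p-1)/d \rfloor$, the largest integer with $dx_0 \leq p-1$. First I would verify that $2 \leq x_0 \leq p-2$, which follows at once from $p - 1 \geq 2d$ and $d \geq 3$, so that $x_0$ is a legitimate test value in Theorem \ref{numcriterion}. Then I would split into two cases according to whether $d$ divides $p-1$.

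In the easy case $d \mid p-1$ one has $x_0 = (p-1)/d$ exactly, and the desired inequality $(d-1)x_0 > (p-1)/2$ collapses to $d > 2$, which holds. In the remaining case $d \nmid p-1$, write $p-1 = dx_0 + r$ with $1 \leq r \leq d-1$; after clearing denominators the target becomes $(d-2)(p-1) > 2(d-1)r$, and I would close the argument using the bounds $p - 1 \geq 2d$ and a sharp upper bound on $r$.

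The hard part, and the step where the full strength of the hypothesis is used, is the sharp bound $r \leq d-2$ rather than the naive $r \leq d-1$: the naive bound only gives $(d-2)(p-1) \geq 2(d-1)(d-2)$ with equality at $p = 2d-1$, so it is not strong enough at the boundary $p = 2d+1$. To refine it I would use that $p$ is prime and $d < p$, so $p \not\equiv 0 \pmod d$, whence $r = (p-1) \bmod d \neq d-1$. Combined with $p-1 \geq 2d$, this upgrade gives $(d-2)(p-1) \geq 2d(d-2) > 2(d-1)(d-2) \geq 2(d-1)r$, which is the strict inequality required.
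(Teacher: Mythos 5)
Your proposal is correct and takes essentially the same approach as the paper: both apply Theorem \ref{numcriterion} with $r=1$ at the same test value $x_0=\lfloor (p-1)/d\rfloor$, reduce to the inequality $(d-1)x_0>(p-1)/2$, and invoke the primality of $p$ (via $d\nmid p$) to close the boundary case. The only difference is bookkeeping --- the paper excludes the single equality case $p=9$ directly, while you sharpen the remainder bound to $r\leq d-2$.
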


\begin{proof}
 Let $p=qd+r$ with $q\geq 2$ and $1\leq r\leq d$. We claim that $[dq]_{p,1}>[q]_{p,1}+\frac{p-1}{2}$, so ${\mathcal F}_d$ can not have finite monodromy by Theorem \ref{numcriterion}.

Since $q<dq\leq p-1$, $[dq]_{p,1}=dq$ and $[q]_{p,1}=q$. So the inequality is equivalent to $2(d-1)q>p-1=qd+r-1$ or, equivalently, $q(d-2)>r-1$. Now
$$
q(d-2)\geq 2(d-2)=d+d-4\geq d-1\geq r-1 
$$
with equality if and only if $d=r=3,q=2$, in which case $p=9$ is not prime.
\end{proof}

\begin{lemma} \label{sum}
 For every $r\geq 1$ and $x,y\in{\mathbb Z}$ we have $[x+y]_{p,r}\leq[x]_{p,r}+[y]_{p,r}$.
\end{lemma}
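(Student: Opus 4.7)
The plan is to use the explicit formula for $[x]_{p,r}$ stated just before Theorem \ref{numcriterion}: for any integer $x$ not divisible by $p^r-1$,
$$
[x]_{p,r} = (p-1)\sum_{i=0}^{r-1}\left\{\frac{p^i x}{p^r-1}\right\}.
$$
Once the quantities are rewritten in this form, the lemma reduces to the elementary subadditivity of the fractional part, $\{a+b\}\leq \{a\}+\{b\}$ for all $a,b\in\mathbb R$ (both sides lie in $[0,2)$, and $\{a+b\}$ equals either $\{a\}+\{b\}$ or $\{a\}+\{b\}-1$). One applies this termwise with $a=p^i x/(p^r-1)$ and $b=p^i y/(p^r-1)$.

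Concretely, fix $r\geq 1$ and integers $x,y$ with none of $x$, $y$, $x+y$ divisible by $p^r-1$. For each $i=0,\ldots,r-1$ subadditivity gives
$$
\left\{\frac{p^i(x+y)}{p^r-1}\right\}\leq \left\{\frac{p^i x}{p^r-1}\right\}+\left\{\frac{p^i y}{p^r-1}\right\};
$$
summing over $i$ and multiplying by $p-1$ yields $[x+y]_{p,r}\leq [x]_{p,r}+[y]_{p,r}$.

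The only minor subtlety is the boundary case in which one of $x$, $y$, $x+y$ is a multiple of $p^r-1$, since then the fractional-part formula breaks down: the chosen representative in $[1,p^r-1]$ is $p^r-1$ itself, with digit sum $r(p-1)$, while the formula would give $0$. These cases are immediate: if $x\equiv 0\pmod{p^r-1}$ then $[x]_{p,r}=r(p-1)$ and $[x+y]_{p,r}=[y]_{p,r}$, so the inequality is trivial; and if $x+y\equiv 0\pmod{p^r-1}$ then by the identity $[-a]_{p,r}=r(p-1)-[a]_{p,r}$ already recorded in the paper one gets equality $[x]_{p,r}+[y]_{p,r}=r(p-1)=[x+y]_{p,r}$. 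I do not anticipate any genuine obstacle: the lemma is essentially the statement that carries in base $p$ can only decrease the digit sum, repackaged through the fractional-part formula.
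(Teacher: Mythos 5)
Your proof is correct, but it takes a genuinely different route from the paper's. The paper argues combinatorially: any $z\in[1,p^r-1]$ can be written as a sum of exactly $[z]_{p,r}$ powers of $p$, and conversely any $z\geq 1$ expressible as a sum of $m$ powers of $p$ satisfies $[z]_{p,r}\leq m$ (carries in base $p$ only decrease the digit sum, since $p^r\equiv 1 \pmod{p^r-1}$); writing $x$ and $y$ as such sums and concatenating gives the lemma in two lines, with no boundary cases to separate out. You instead route everything through the explicit formula $[x]_{p,r}=(p-1)\sum_{i=0}^{r-1}\{p^ix/(p^r-1)\}$ and the termwise subadditivity $\{a+b\}\leq\{a\}+\{b\}$, which is also valid: the formula depends only on $x$ bmod $p^r-1$, your fractional-part inequality is correct, and your treatment of the degenerate cases ($x$, $y$, or $x+y$ divisible by $p^r-1$, where the formula fails) is right, including the observation that $x+y\equiv 0$ forces equality via $[-a]_{p,r}=r(p-1)-[a]_{p,r}$. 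The trade-off is that your argument leans on the fractional-part formula, which the paper quotes as well known but does not prove, and it requires the explicit case analysis at the boundary; the paper's argument is self-contained and uniform, and it isolates the underlying combinatorial mechanism (carrying) that your final remark correctly identifies as the real content of the lemma.
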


\begin{proof}
 It suffices to prove it for $1\leq x,y\leq p^r-1$. First of all, it is clear that, if an integer $z\geq 1$ can be written as a sum of $m$ powers of $p$, then $[z]_{p,r}\leq m$ for every $r\geq 1$. Conversely, if $1\leq z\leq p^r-1$, then $z$ can be written as a sum of $[z]_{p,r}$ powers of $p$. 
 
 So $x$ (resp. $y$) can be written as a sum of $[x]_{p,r}$ (resp. $[y]_{p,r}$ powers of $p$), and therefore $x+y$ can be written as a sum of $[x]_{p,r}+[y]_{p,r}$ powers of $p$. We conclude that $[x+y]_{p,r}\leq [x]_{p,r}+[y]_{p,r}$.
\end{proof}

\begin{cor}\label{case1}
 Let $d=p^a+1$ for some integer $a\geq 1$. Then ${\mathcal F}_d$ has finite monodromy.
\end{cor}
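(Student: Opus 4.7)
The plan is to verify the numerical criterion of Theorem \ref{numcriterion} directly: for every $r\geq 1$ and every integer $1\leq x\leq p^r-2$, I need
$$
[dx]_{p,r}\leq [x]_{p,r}+\frac{r(p-1)}{2}.
$$
I would split the argument into two cases, according to whether $[x]_{p,r}$ lies above or below the midpoint $\frac{r(p-1)}{2}$.

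The easy regime is $[x]_{p,r}>\frac{r(p-1)}{2}$. Here the trivial upper bound $[dx]_{p,r}\leq r(p-1)$ already suffices, since $r(p-1)=\frac{r(p-1)}{2}+\frac{r(p-1)}{2}<[x]_{p,r}+\frac{r(p-1)}{2}$. Nothing about $d$ is used in this range.

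The substantive case is $[x]_{p,r}\leq\frac{r(p-1)}{2}$. Here the point is to exploit the special form $d=p^a+1$ by writing $dx=p^ax+x$ and applying the subadditivity of Lemma \ref{sum}:
$$
[dx]_{p,r}\leq[p^ax]_{p,r}+[x]_{p,r}.
$$
Now the identity $[px]_{p,r}=[x]_{p,r}$ recorded just before Theorem \ref{numcriterion} (multiplication by $p$ merely cyclically shifts the base-$p$ digits modulo $p^r-1$), iterated $a$ times, yields $[p^ax]_{p,r}=[x]_{p,r}$, so
$$
[dx]_{p,r}\leq 2[x]_{p,r}\leq[x]_{p,r}+\frac{r(p-1)}{2},
$$
and the criterion is verified.

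The argument is essentially formal once one has Theorem \ref{numcriterion} and Lemma \ref{sum} in hand; there is no genuine obstacle. The only idea is to split on $[x]_{p,r}$ so that the trivial bound covers the large range and the Lemma \ref{sum}/cyclic-shift combination handles the small range.
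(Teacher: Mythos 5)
Your proof is correct and is essentially identical to the paper's: the same case split on whether $[x]_{p,r}$ exceeds $\frac{r(p-1)}{2}$, the same use of the subadditivity Lemma \ref{sum} applied to $dx = p^ax + x$, and the same invariance $[p^ax]_{p,r}=[x]_{p,r}$. No differences worth noting.
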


\begin{proof}
 We need to show that $[dx]_{p,r}\leq [x]_{p,r}+\frac{r(p-1)}{2}$ for every $r\geq 1$ and every $1\leq x\leq p^r-2$. If $[x]_{p,r}\geq\frac{r(p-1)}{2}$ this is obvious, since $[dx]_{p,r}\leq r(p-1)$. Suppose that $[x]_{p,r}\leq\frac{r(p-1)}{2}$. Then
$$
[dx]_{p,r}=[(p^a+1)x]_{p,r}\leq[p^ax]_{p,r}+[x]_{p,r}=2[x]_{p,r}\leq [x]_{p,r}+\frac{r(p-1)}{2}
$$
for every $1\leq x\leq p^r-2$.
\end{proof}

\begin{cor}
 Let $d=\frac{p^{a}+1}{p^b+1}$, with $a>b\geq 1$. Then ${\mathcal F}_d$ has finite monodromy.
\end{cor}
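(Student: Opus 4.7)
The plan is to verify the numerical criterion of Theorem \ref{numcriterion}: for every $r \geq 1$ and every $1 \leq x \leq p^r - 2$, I must show $[dx]_{p,r} \leq [x]_{p,r} + r(p-1)/2$. The driving algebraic input is $d(p^b + 1) = p^a + 1$, which gives $(p^b + 1)\, dx = (p^a + 1)\, x$ as integers. Setting $N := p^r - 1$ and multiplying by $p^i/N$, I obtain for each $0 \leq i \leq r-1$ the real-number equality
$$
\frac{p^i\, dx}{N} + \frac{p^{i+b}\, dx}{N} = \frac{p^i\, x}{N} + \frac{p^{i+a}\, x}{N}.
$$
Writing $A_i := \{p^i dx/N\}$ and $B_i := \{p^i x/N\}$ (each periodic in $i$ modulo $r$, since $p^r \equiv 1 \pmod N$), taking fractional parts converts this into
$$
A_i + A_{(i+b) \bmod r} - \epsilon_i^{(d)} = B_i + B_{(i+a) \bmod r} - \epsilon_i^{(x)},
$$
with $\epsilon_i^{(d)}, \epsilon_i^{(x)} \in \{0, 1\}$ recording whether the corresponding sum of fractional parts reaches $1$.

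Summing over $i = 0, \ldots, r-1$, cyclic-shift invariance collapses the relation to $2\sum_i A_i - E^{(d)} = 2\sum_i B_i - E^{(x)}$, where $E^{(d)} := \sum_i \epsilon_i^{(d)}$ and $E^{(x)} := \sum_i \epsilon_i^{(x)}$ both lie in $[0, r]$. In the generic case $dx \not\equiv 0 \pmod N$, the explicit formula $[y]_{p,r} = (p-1)\sum_i \{p^i y/N\}$ (valid whenever $y \not\equiv 0 \pmod N$) rewrites this as
$$
[dx]_{p,r} - [x]_{p,r} = \frac{(p-1)(E^{(d)} - E^{(x)})}{2} \leq \frac{(p-1)r}{2},
$$
which is exactly the inequality demanded by Theorem \ref{numcriterion}.

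The main obstacle is the edge case $dx \equiv 0 \pmod N$, where the fractional-part formula fails and $[dx]_{p,r} = r(p-1)$ by convention. I plan to handle it by observing that all $A_i$ vanish in this case, so each of the identities above forces $p^i x/N + p^{i+a}x/N$ to be an integer; combined with $\gcd(p, N) = 1$ and $x \not\equiv 0 \pmod N$ keeping every $B_i$ strictly positive, this pins $B_i + B_{(i+a) \bmod r}$ to exactly $1$, giving $[x]_{p,r} = r(p-1)/2$ and the required inequality with equality. Beyond this single edge case, the argument is essentially automatic once the identity $d(p^b+1) = p^a + 1$ is used to couple the two fractional-part sums; the only subtleties are cyclic indexing modulo $r$ and keeping track of the degenerate situation just described.
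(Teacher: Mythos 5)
Your proof is correct, and it takes a genuinely different route from the paper's. The paper deduces the corollary in two lines from its subadditivity lemma $[x+y]_{p,r}\leq [x]_{p,r}+[y]_{p,r}$ (proved by writing integers as sums of powers of $p$) together with the negation rule $[-x]_{p,r}=r(p-1)-[x]_{p,r}$: applying subadditivity to $[-(p^b+1)dx]_{p,r}\leq 2[-dx]_{p,r}$ and to $[(p^a+1)x]_{p,r}\leq 2[x]_{p,r}$, and using $(p^b+1)dx=(p^a+1)x$ to link the two, yields the bound directly. You instead expand everything through the fractional-part formula $[y]_{p,r}=(p-1)\sum_i\{p^iy/(p^r-1)\}$ and track the carries $\epsilon_i$ explicitly, arriving at the exact identity $[dx]_{p,r}-[x]_{p,r}=\tfrac{(p-1)}{2}(E^{(d)}-E^{(x)})$, which is strictly more information than the paper extracts (it quantifies the defect in the inequality as a difference of carry counts, and in the degenerate case $p^r-1\mid dx$ it pins down $[x]_{p,r}=r(p-1)/2$ exactly). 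What the paper's route buys is brevity and reuse of a lemma already needed elsewhere; what yours buys is a self-contained carry-counting identity and an explicit, rather than implicit, treatment of the edge case $dx\equiv 0\pmod{p^r-1}$ (which the paper's proof quietly absorbs into its conventions for $[\cdot]_{p,r}$). All steps in your argument check out: the cyclic periodicity of the $A_i,B_i$ from $p^r\equiv 1$, the shift-invariance of the sums, and the positivity of the $B_i$ in the degenerate case are all used correctly.
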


\begin{proof}
By lemma \ref{sum}, $[(p^b+1)z]_{p,r}\leq 2[z]_{p,r}$ for every $z\in{\mathbb Z}$. Taking $z=-dx$ and using that $[-x]_{p,r}=r(p-1)-[x]_{p,r}$, we get
$$
r(p-1)-[(p^a+1)x]_{p,r}=r(p-1)-[(p^b+1)dx]_{p,r}=[-(p^b-1)dx]_{p,r}\leq 
$$
$$\leq 2[-dx]_{p,r}= 2r(p-1)-2[dx]_{p,r} \Rightarrow 
$$
$$
\Rightarrow [dx]_{p,r}\leq \frac{r(p-1)}{2}+\frac{1}{2}[(p^a+1)x]_{p,r} \leq [x]_{p,r}+\frac{r(p-1)}{2}.
$$
\end{proof}

\begin{rmk} In the situation of the previous corollary, $a$ must be of the form $bc$ with $c$ odd. Indeed, let $a=bc+r$ with $0\leq r < b$. Since $p^a+1$ is a multiple of $p^b+1$, we have
 $$
0\equiv p^a+1 = p^{bc}p^r+1\equiv (-1)^cp^r+1 (\mbox{mod }p^b+1).
$$
Since $|(-1)^cp^r+1|<|p^b+1|$, we conclude that $(-1)^cp^r=-1$, that is, $r=0$ and $c$ is odd.
\end{rmk}

In the case $p=2$ we conjecture that the only cases where the monodromy is finite are the ones covered in the previous corollaries. This has been checked to be true computationally for $d$ up to 10000.

\begin{conjecture}
 Let $p=2$. Then ${\mathcal F}_d$ has finite monodromy if and only if $d$ has the form $2^a+1$ for some $a\geq 1$ or $\frac{2^a+1}{2^b+1}$ for some $b\geq 1$ and $a=bc$ with odd $c\geq 3$.
\end{conjecture}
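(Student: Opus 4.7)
The ``if'' direction is contained in Corollary \ref{case1} and the corollary immediately following it, so what remains is the ``only if'' direction. The natural plan is to argue by contrapositive: for each $d$ not of the listed forms, exhibit $r \geq 1$ and $1 \leq x \leq 2^r - 2$ for which the inequality of Theorem \ref{numcriterion} fails, i.e.,
\[
[dx]_{2,r} \;>\; [x]_{2,r} + \frac{r}{2},
\]
where for $p=2$ the quantity $[y]_{2,r}$ is simply the Hamming weight of $y$ reduced modulo $2^r-1$.

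A first pass takes $x=1$ and $r = \lceil \log_2 (d+1)\rceil$, which produces a witness as soon as $[d]_{2,r} > \tfrac{r}{2}+1$. This already rules out every $d$ with a relatively dense binary expansion (for instance $d=7$ at $r=3$, or $d=23$ at $r=5$) and reduces the problem to low-weight $d$. The conjectured list consists entirely of such low-weight exponents: Gold $2^a+1$ has weight $2$, and the Kasami-like ratios $(2^{bc}+1)/(2^b+1) = 2^{b(c-1)} - 2^{b(c-2)} + \cdots + 1$ have weight $c$ with supports placed in the rigid arithmetic progression $0, b, 2b, \ldots, b(c-1)$.

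For a low-weight $d$ outside the Kasami--Gold family, I would try to detect the deviation of its binary support from an arithmetic progression and leverage that deviation to construct $x$. One concrete family to test is $x = \sum_{i=0}^{m-1} 2^{is}$ with $s, m$ dictated by the binary expansion of $d$: such $x$ has weight $m$, while $dx \bmod (2^r-1)$ can in principle have Hamming weight close to $m\cdot[d]_{2,r}$, and the game is to force enough overlaps or wrap-arounds modulo $2^r-1$ to push it past $m + r/2$. A parallel route is to reformulate the criterion as a statement about the multiplication-by-$d$ action on cyclotomic cosets in $\ZZ/(2^r-1)\ZZ$ and to analyse how Hamming weights vary along such orbits.

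The main obstacle, and the reason this is stated as a conjecture rather than a theorem, is exactly this low-weight case. The inequality of Theorem \ref{numcriterion} at $p=2$ is very close to statements about minimum-weight codewords of cyclic and BCH codes, and the claim that Kasami--Gold are the only exponents producing finite monodromy sits near the long-standing classification conjecture for APN power functions mentioned in the introduction. I would therefore expect that a complete proof cannot come from an elementary $(r,x)$-case analysis alone: it would likely require either a sharp new identity controlling $[dx]_{2,r}$ directly in terms of the binary structure of $d$, or additional algebro-geometric input beyond Stickelberger's theorem. Accordingly the above should be read as a strategy for attack rather than a roadmap to a proof.
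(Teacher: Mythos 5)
The statement you are asked to prove is stated in the paper as a \emph{conjecture}: the paper itself offers no proof of the ``only if'' direction, only the remark that it has been verified computationally for $d\leq 10000$. Your proposal correctly observes that the ``if'' direction follows from Corollary \ref{case1} and the corollary after it, and you are candid that the rest is a strategy rather than an argument. That honesty is appropriate, but it means the proposal does not establish the statement: the entire ``only if'' direction --- producing, for every $d$ not of Gold or Kasami type, a pair $(r,x)$ with $[dx]_{2,r}>[x]_{2,r}+\frac{r}{2}$ --- is left open. Your candidate witnesses ($x=1$ with $r=\lceil\log_2(d+1)\rceil$, or $x=\sum_{i<m}2^{is}$) are reasonable first probes, but no argument is given that they succeed for all exceptional $d$, and the hard residual case (low binary weight $d$ whose support is close to, but not exactly, an arithmetic progression) is precisely where the conjecture lives; you acknowledge this yourself.

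Two smaller points. First, your assertion that $(2^{bc}+1)/(2^b+1)=2^{b(c-1)}-2^{b(c-2)}+\cdots+1$ has binary weight $c$ is not correct: the alternating signs produce borrows, e.g.\ $(2^9+1)/(2^3+1)=57=111001_2$ has weight $4$ while $c=3$, and $(2^5+1)/(2+1)=11=1011_2$ has weight $3$ while $c=5$. Since your whole plan is to read the failure of the criterion off the binary support of $d$, this miscalculation matters for the viability of the approach. Second, the connection you draw to the APN classification problem is consistent with the paper's introduction and correctly signals why an elementary case analysis is unlikely to suffice. In summary: the ``if'' half is fine and matches the paper; the ``only if'' half is not proved here, nor in the paper, and your proposal should be read as a (partially flawed) plan of attack on an open problem rather than as a proof.
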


\section{The monodromy in the non-finite case}

In this section we will completely determine the geometric monodromy group of $\FF_d$ in the case where it is infinite. By \cite[Proposition 11.1]{such2000monodromy}, if the monodromy is not finite, then $\FF_d$ is either Lie-irreducible or Artin-Schreier induced. We will see that the latter case is not possible.

\begin{proposition}\label{lie-irreducible}
 Suppose that the monodromy of $\FF_d$ is not finite. Then $\FF_d$ is Lie-irreducible.
\end{proposition}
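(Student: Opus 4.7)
The plan is to rule out the Artin–Schreier induced case from the dichotomy in \cite[Proposition 11.1]{such2000monodromy}. Since $\FF_d$ is the Fourier transform of the irreducible sheaf $\LL_{\psi(t^d)}$ on $\AAA^1$, it is geometrically irreducible; the alternative to Lie-irreducibility is therefore that $\FF_d$ is induced from some $\Fp$-Galois étale cover of $\AAA^1$. The overall strategy is to show that this possibility forces $d = p^e+1$ for some $e\geq 1$, at which point Corollary~\ref{case1} contradicts the hypothesis that the monodromy of $\FF_d$ is not finite.

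First I would reduce the Artin–Schreier induction condition to a polynomial identity. A geometrically irreducible lisse sheaf on $\AAA^1$ is Artin–Schreier induced if and only if it is geometrically isomorphic to its own twist by some non-trivial Artin–Schreier sheaf $\LL_{\psi(as)}$ with $a\in\bar k^{\ast}$ (via the projection formula applied to the associated degree-$p$ cover). Using that the Fourier transform interchanges translations with twists by Artin–Schreier characters, and that $\FF_d=\mathrm{FT}(\LL_{\psi(t^d)})$, this becomes $\LL_{\psi((t+a)^d)}\cong \LL_{\psi(t^d)}$ geometrically; equivalently, by Artin–Schreier theory,
$$(t+a)^d - t^d = \alpha^p - \alpha\qquad\text{for some } \alpha\in \bar k[t].$$
Comparing degrees forces $p\mid d-1$, so writing $d=pm+1$ one has $\deg\alpha = m$. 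Since $\alpha^p-\alpha$ has possibly non-zero coefficients only at degrees in $\{0,1,\ldots,m\}\cup\{0,p,2p,\ldots,pm\}$, matching the coefficient of $t^j$ for $m<j\leq pm$ with $p\nmid j$ gives the necessary congruences $\binom{d}{j}\equiv 0\pmod{p}$ for all such $j$.

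The main obstacle is the combinatorial step showing that this system of congruences forces $m$ to be a power of $p$, and hence $d=p^e+1$. Applying Lucas' theorem to the base-$p$ expansion of $d = pm+1$, namely $(1,m_0,m_1,\ldots)$, the condition $p\nmid j$ together with $\binom{d}{j}\not\equiv 0$ forces $j = kp+1$ with $k$ digit-wise dominated by $m$. A short verification shows that $k = p^{\lfloor \log_p m\rfloor}$ lies in the forbidden range $((m-1)/p,\,m-1]$ and is dominated by $m$ whenever $m$ is not itself a power of $p$; conversely, when $m=p^{e-1}$ this range contains no such $k$. Combined with Corollary~\ref{case1}, which asserts that $\FF_d$ has finite monodromy whenever $d=p^e+1$, this contradicts the hypothesis of non-finite monodromy. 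Hence $\FF_d$ cannot be Artin–Schreier induced, and by Šuch's proposition it must be Lie-irreducible.
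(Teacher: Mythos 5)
Your proof is correct and follows the same overall strategy as the paper: rule out the Artin--Schreier-induced branch of \cite[Proposition 11.1]{such2000monodromy} by showing it would force $(x+a)^d-x^d$ to be of the form $\alpha^p-\alpha$ for some $a\neq 0$, hence $d=p^e+1$, contradicting Corollary \ref{case1}. You differ in two technical steps. To reach the polynomial identity, the paper computes $\HHH^2_c(\AAA^1_{\bar k},\FF_d\otimes\widehat\FF_d\otimes\LL_{\psi(at)})$ explicitly via K\"unneth, the projection formula and the Leray spectral sequence, ending at $\HHH^2_c(\AAA^1_{\bar k},\LL_{\psi(x^d-(x+a)^d)})$; you instead use geometric irreducibility to restate Artin--Schreier induction as $\FF_d\cong\FF_d\otimes\LL_{\psi(at)}$ and invoke the Fourier-transform compatibility between twists and translations --- shorter and cleaner, at the cost of quoting more of the FT formalism (and note that the Artin--Schreier equivalence a priori reads $h^p-h+c$; over $\bar k$ the constant is itself of that form, so this is harmless but worth a word). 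For the combinatorial step, the paper's lemma exhibits the witness $l=p^{\mathrm{ord}_p(d-1)}$ and checks $p\nmid\binom{d}{l}$ by matching $p$-adic valuations $\mathrm{ord}_p(d-1-j)=\mathrm{ord}_p(l-j)$, whereas you apply Lucas' theorem to $d=pm+1$ with the witness $j=p^{\lfloor\log_p m\rfloor+1}+1$; I verified that this $j$ satisfies $m<j\leq pm$, $p\nmid j$ and $\binom{d}{j}\not\equiv 0\pmod p$ exactly when $m$ is not a power of $p$, so your choice is equally valid and arguably more transparent. Both executions are sound.
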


\begin{proof}
 Suppose that $\FF_d$ were Artin-Schreier induced. Then the proof of \cite[Proposition 11.1]{such2000monodromy} shows that $\FF_d\otimes\widehat\FF_d$ contains an Artin-Schreier subsheaf $\LL_{\psi(at)}$ for some $a\in{\bar k}^\ast$. That is, 
$$
\mathrm{Hom}(\FF_d\otimes\widehat\FF_d,\LL_{\psi(at)})\neq 0
$$
for some $a\in{\bar k}^\ast$ or, equivalently,
$$
\HHH^0(\AAA^1_{\bar k},\FF_d\otimes\widehat\FF_d\otimes\LL_{\psi(at)})\neq 0
$$
for some $a\in{\bar k}^\ast$. By Poincaré duality, this is equivalent to 
$$
\HHH^2_c(\AAA^1_{\bar k},\FF_d\otimes\widehat\FF_d\otimes\LL_{\psi(at)})\neq 0
$$
for some $a\in{\bar k}^\ast$, since the dual of $\LL_{\psi(at)}$ is $\LL_{\psi(-at)}$.

Given that $\FF_d=\R^1\pi_!\LL_{\psi(x^d+tx)}$ and $\R^i\pi_!\LL_{\psi(x^d+tx)}=0$ for $i\neq 1$, where $\pi:\AAA^2_{\bar k}\to\AAA^1_{\bar k}$ is the projection $(x,t)\mapsto t$, the K\"unneth formula gives
$$
\FF_d\otimes\widehat\FF_d\cong\R^2\sigma_!(\LL_{\psi(x^d+tx)}\otimes\LL_{\psi(-y^d-ty)})\cong\R^2\sigma_!(\LL_{\psi(x^d-y^d+t(x-y))})
$$
and
$$
\R^i\sigma_!(\LL_{\psi(x^d+tx)}\otimes\LL_{\psi(-y^d-ty)})=0 \mbox{ for }i\neq 2
$$
where $\sigma:\AAA^2_{\bar k}\times_{\AAA^1_{\bar k}}\AAA^2_{\bar k}\cong\AAA^3_{\bar k}\to\AAA^1_{\bar k}$ is the projection $(x,y,t)\mapsto t$. Therefore
$$
\HHH^2_c(\AAA^1_{\bar k},\FF_d\otimes\widehat\FF_d\otimes\LL_{\psi(at)})\cong\HHH^2_c(\AAA^1_{\bar k},\R^2\sigma_!(\LL_{\psi(x^d-y^d+t(x-y))})\otimes\LL_{\psi(at)})\cong
$$
$$
\cong\HHH^4_c(\AAA^3_{\bar k},\LL_{\psi(x^d-y^d+t(x-y+a))})
$$
by the projection formula and the Leray spectral sequence. If $\tau:\AAA^3_{\bar k}\to\AAA^2_{\bar k}$ denotes the projection $(x,y,t)\mapsto (x,y)$, the sheaf $\R^i\tau_!\LL_{\psi(t(x-y+a))}$ is supported on the line $y=x+a$, on which it is constant of rank $1$ for $i=2$ and vanishes for $i\neq 2$, by looking a the fibers. So
$$
\R^i\tau_!\LL_{\psi(x^d-y^d+t(x-y+a))})\cong\LL_{\psi(x^d-y^d)}\otimes\R^i\tau_!\LL_{\psi(t(x-y+a))}
$$
is isomorphic to $\LL_{\psi(x^d-y^d)}$ on the line $y=x+a$ for $i=2$, and zero otherwise. So we get
$$
\HHH^4_c(\AAA^3_{\bar k},\LL_{\psi(x^d-y^d+t(x-y+a))})\cong\HHH^2_c(\AAA^2_{\bar k},\LL_{\psi(x^d-y^d)}\otimes\R^i\tau_!\LL_{\psi(t(x-y+a))})\cong
$$
$$
\cong
\HHH^2_c(\{y=x+a\},\LL_{\psi(x^d-y^d)})\cong\HHH^2_c(\AAA^1_{\bar k},\LL_{\psi(x^d-(x+a)^d)})
$$
via the isomorphism $\{y=x+a\}\to \AAA^1_{\bar k}$ given by the projection on the $x$-coordinate. We conclude that
$$
\mathrm{Hom}(\FF_d\otimes\widehat\FF_d,\LL_{\psi(at)})\neq 0
$$
for some $a\in{\bar k}^\ast$ if and only if
$$
\HHH^2_c(\AAA^1_{\bar k},\LL_{\psi(x^d-(x+a)^d)})\neq 0
$$
for some $a\in{\bar k}^\ast$. But given a polynomial $f(x)\in\bar k[x]$, $\HHH^2_c(\AAA^1_{\bar k},\LL_{\psi(f(x))})\neq 0$ if and only if the sheaf $\LL_{\psi(f(x))}$ is constant, which happens only when $f(x)$ is Artin-Schrier equivalent to a constant, that is $f(x)=g(x)^p-g(x)$ for some $g(x)\in\bar k[x]$.

Since $x^d-(x+a)^d=-\sum_{i=1}^d{d\choose i}a^ix^{d-i}$ has degree $\leq d-1$, such a polynomial $g$ would have degree $\leq\frac{d-1}{p}$. In particular, every monomial with non-zero coefficient in $f$ of degree $>\frac{d-1}{p}$ would have degree a multiple of $p$. In other words, for every $i<d-\frac{d-1}{p}$ such that $d-i$ is not a multiple of $p$, the binomial coefficient ${d\choose i}$ would be a multiple of $p$.

We already know that for $d$ of the form $p^r+1$ the monodromy of $\FF_d$ is finite, so the result is then a consequence of the following lemma
\end{proof}

\begin{lemma}
 Suppose that $d$ is not of the form $p^r+1$ for some $r\geq 1$. Then there exists some positive integer $l<d-\frac{d-1}{p}$ such that $d-l$ is not a multiple of $p$ and ${d\choose l}$ is not a multiple of $p$. 
\end{lemma}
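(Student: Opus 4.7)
The plan is to write the base-$p$ expansion $d = d_0 + d_1 p + \cdots + d_r p^r$ with $d_r \geq 1$, noting that $d_0 \geq 1$ since $\gcd(d,p)=1$. Lucas's theorem rewrites $p \nmid \binom{d}{l}$ as the digitwise condition $l_i \leq d_i$ for all $i$; the constraint $p \nmid (d-l)$ becomes $l_0 < d_0$; and the inequality $l < d - \frac{d-1}{p}$ is equivalent, after clearing denominators and using that both sides are integers, to $p(d-l) \geq d$. With these translations in hand, I would construct $l$ by an explicit case split on $e := d - d_r p^r = d_0 + d_1 p + \cdots + d_{r-1} p^{r-1}$.

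In the generic case $e \geq 2$, I would take $l := e - 1$, whose base-$p$ digits are $(d_0 - 1, d_1, \ldots, d_{r-1}, 0)$: Lucas then gives $\binom{d}{l} \equiv d_0 \not\equiv 0 \pmod p$, while $d - l = d_r p^r + 1 \equiv 1 \pmod p$, and $p(d-l) - d = d_r p^r(p-1) + p - e > 0$ since $e < p^r \leq d_r p^r(p-1)$. When $e = 1$, so that $d = d_r p^r + 1$, the hypothesis $d \neq p^r + 1$ forces $d_r \geq 2$ (and hence also $p \geq 3$); taking $l := p^r$ yields $\binom{d}{l} \equiv d_r \not\equiv 0 \pmod p$, $d - l = (d_r-1)p^r + 1 \equiv 1 \pmod p$, and a direct calculation gives
$$
p(d-l) - d = p^r\bigl((d_r - 1)(p-1) - 1\bigr) + (p - 1) \geq p - 1 > 0.
$$
Finally, $e = 0$ forces $r = 0$ and $d = d_0 < p$ by coprimality, and $l := d - 1$ trivially satisfies all four conditions.

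The only step where the hypothesis on $d$ genuinely enters is the case $e = 1$, where it is used to rule out $d_r = 1$ (which is exactly the exceptional family $d = p^r + 1$); the other cases reduce to bookkeeping with Lucas's theorem and a short numerical verification of $p(d-l) \geq d$.
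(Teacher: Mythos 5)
Your proof is correct, and it takes a genuinely different route from the paper's. The paper makes a single uniform choice, $l=p^r$ with $r=\mathrm{ord}_p(d-1)$: the hypothesis guarantees $d-1\geq 2p^r$ (which gives the size bound $l\leq\frac{d-1}{2}$), and $p\nmid\binom{d}{l}$ is verified by hand, matching $\mathrm{ord}_p(d-1-j)=\mathrm{ord}_p(l-j)$ factor by factor in $\binom{d}{l}=\frac{d(d-1)\cdots(d-l+1)}{l!}$ — essentially a bare-hands Kummer/Lucas computation for this particular $l$. You instead translate all three conditions into digit conditions (Lucas for the binomial coefficient, $l_0\neq d_0$ for $p\nmid(d-l)$, and the clean integer reformulation $p(d-l)\geq d$ of the size bound) and split on $e=d-d_rp^r$; I checked each case and the arithmetic goes through, including the key point that $e=1$ together with the hypothesis forces $d_r\geq 2$, which is exactly where the exceptional family $d=p^s+1$ is excluded. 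Your choice of $l$ coincides with the paper's in the case $e=1$ but differs in general (e.g.\ $d=25$, $p=3$: you take $l=6$, the paper takes $l=3$). The paper's argument is shorter and avoids invoking Lucas's theorem by name; yours makes the role of the base-$p$ digits of $d$ more transparent and isolates precisely where the hypothesis is used. Two minor points of presentation: the condition $p\nmid(d-l)$ is literally $l_0\neq d_0$ rather than $l_0<d_0$ (the strict inequality is what you get after also imposing Lucas's $l_0\leq d_0$, and in any case you verify $d-l\equiv 1\pmod p$ directly in each case), and in the case $e\geq 2$ you should note explicitly that $r\geq 1$ (automatic, since $e>0$ forces $r\geq 1$) so that $d-l=d_rp^r+1\equiv 1\pmod p$.
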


\begin{proof}
 Let $r=\mathrm{ord}_p(d-1)$. We will see that $l=p^r$ satisfies the stated conditions. Since $d-1$ is not a power of $p$, we have $d-1\geq 2p^r=2l$, so
$$
l\leq \frac{d-1}{2}=d-\frac{d+1}{2}< d-\frac{d-1}{2}\leq d-\frac{d-1}{p}.
$$
Also, $d-l$ is not a multiple of $p$: if $r=0$ then $d-l=d-1$ is not a multiple of $p$ by definition of $r$. If $r>0$ then $d-l=(d-1)-l+1$ with $d-1$ and $l$ multiples of $p$, so $d$ is not a multiple of $p$. It remains to check that ${d\choose l}$ is not a multiple of $p$. That is, that $\mathrm{ord}_p(d(d-1)\cdots(d-l+1))=\mathrm{ord}_p((d-1)\cdots(d-l+1))=\mathrm{ord}_p(l!)$.

In fact, we will check that $\mathrm{ord}_p(d-1-j)=\mathrm{ord}_p(l-j)$ for every $j=0,1,\ldots,l-2$. For $j=0$ it is clear by definition of $l$. For $j\geq 1$, since $j<l=p^r$, we have $\mathrm{ord}_p(j)<r$, so $\mathrm{ord}_p(l-j)=\mathrm{ord}_p(j)=\mathrm{ord}_p(d-1-j)$.
\end{proof}

Using the results of Katz and Such, this allows to completely determine the geometric monodromy groups in the non-finite case

\begin{cor}
 Let $G$ be the geometric monodromy group of $\FF_d$. If $G$ is not finite, then
\begin{enumerate}
 \item If $p=2$, then $G=\mathrm{Sp}_{d-1}$.
 \item If $p\neq 2$ and $d$ is odd, then $G=\mathrm{Sp}_{d-1}$.
 \item If $p\neq 2$ and $d$ is even, then $G=\mathrm{SL}_{d-1}$.
\end{enumerate}
\end{cor}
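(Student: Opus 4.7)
The plan is to combine the Lie-irreducibility established in Proposition \ref{lie-irreducible} with the classification of possible non-finite monodromy groups of Airy sheaves given in \cite[Propositions 11.6, 11.7]{such2000monodromy}. From the first lemma of the paper, $\det\FF_d(1)$ is geometrically trivial, so after the Tate twist the geometric monodromy is contained in $\mathrm{SL}_{d-1}$; Lie-irreducibility then restricts $G$ to the entries of Šuch's list, which (up to the finite exceptional cases excluded by our non-finiteness hypothesis) reduces to the three classical possibilities $\mathrm{SL}_{d-1}$, $\mathrm{Sp}_{d-1}$, and $\mathrm{SO}_{d-1}$. What remains is to single out the correct one.

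The discriminating input is the geometric self-duality (or lack thereof) of $\FF_d$. Since $\FF_d = \mathrm{FT}(\LL_{\psi(x^d)})$ and Fourier transform interchanges dualization with pullback by the inversion $[-1]$, the sheaf $\FF_d$ is self-dual precisely when
$$
\LL_{\psi(-x^d)} \cong [-1]^\ast\LL_{\psi(x^d)} = \LL_{\psi((-1)^d x^d)}.
$$
If $p=2$, then $\psi=\bar\psi$, the inversion is trivial, and both sides equal $\LL_{\psi(x^d)}$. If $p$ is odd and $d$ is odd, the right-hand side is directly $\LL_{\psi(-x^d)}$. In both cases $\FF_d$ is self-dual, and the rank $d-1$ is even (noting that $d$ is coprime to $p$, hence odd when $p=2$). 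If $p$ is odd and $d$ is even, the right-hand side collapses to $\LL_{\psi(x^d)}$, which is not isomorphic to $\LL_{\psi(-x^d)}$ because $\psi$ and $\bar\psi$ differ for odd $p$, so $\FF_d$ is not self-dual.

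In the non-self-dual case (3), the only non-finite Lie-irreducible subgroup of $\mathrm{SL}_{d-1}$ on Šuch's list admitting no autoduality is the full $\mathrm{SL}_{d-1}$, which yields the result. In the self-dual cases (1) and (2), the autoduality must be classified as symplectic or orthogonal; the fact that it is symplectic, giving $G=\mathrm{Sp}_{d-1}$, follows from Šuch's explicit enumeration in the Lie-irreducible setting once one takes into account that the single slope at infinity is $d/(d-1)$ and the rank is $d-1$.

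The main obstacle is precisely the symplectic-versus-orthogonal dichotomy in the self-dual cases. I would handle it by direct appeal to Šuch's lists in \cite[Propositions 11.6, 11.7]{such2000monodromy}, where the orthogonal alternatives are ruled out by constraints on slope and rank incompatible with an Airy sheaf of rank $d-1$ and slope $d/(d-1)$; alternatively, if a self-contained argument is needed, I would exhibit the pairing $\FF_d \otimes \FF_d \to \Ql$ explicitly from the Fourier-theoretic isomorphism above and verify its antisymmetry by tracking the sign produced by the inversion $[-1]$ under Fourier inversion, thereby confirming that the pairing is alternating and the monodromy is symplectic.
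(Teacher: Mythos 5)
Your overall strategy --- Lie-irreducibility from Proposition \ref{lie-irreducible} plus Šuch's classification, with self-duality as the discriminating input --- is the same as the paper's, and your Fourier-theoretic derivation of when $\FF_d$ is self-dual is a legitimate alternative to the paper's quicker observation that the Frobenius traces are real exactly when $p=2$ or $d$ is odd. The case $p\neq 2$, $d$ even is handled essentially as in the paper: Šuch leaves only $\mathrm{SL}_{d-1}$ and $\mathrm{Sp}_{d-1}$ as candidates for $G^0$, the symplectic option is impossible (odd rank $d-1$, or non-self-duality), and geometric triviality of the determinant upgrades $G^0=\mathrm{SL}_{d-1}$ to $G=\mathrm{SL}_{d-1}$.

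There is, however, a genuine gap in your treatment of the exceptional entries on Šuch's list. You dismiss them as ``finite exceptional cases excluded by our non-finiteness hypothesis,'' but for $p=2$ the relevant exceptional possibility in \cite[Proposition 11.7]{such2000monodromy} is the Lie algebra $\mathfrak{e}_7$ acting in its $56$-dimensional representation --- an \emph{infinite} Lie-irreducible group, not excluded by assuming $G$ is infinite. It must be ruled out by a separate argument. The paper does this by noting that $\mathfrak{e}_7$ could only occur for $d-1=56$, i.e.\ $d=57=\frac{2^9+1}{2^3+1}$, and for that value of $d$ the monodromy is already known to be finite by the results of Section 3, a contradiction. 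Without some such argument your case (1) is incomplete. Your remaining worry about the symplectic-versus-orthogonal dichotomy is, by contrast, a non-issue: Šuch's enumeration in the self-dual Lie-irreducible case offers only $\mathfrak{sp}_{d-1}$ (besides $\mathfrak{e}_7$), so no orthogonal alternative needs to be excluded, and the explicit antisymmetry computation you sketch as a fallback is unnecessary. Finally, you should also say a word about passing from $G^0$ to $G$ in the symplectic cases (the paper invokes self-duality of $\FF_d$ again: $G$ preserves the symplectic form, hence lies in $\mathrm{Sp}_{d-1}$ itself).
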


\begin{proof}
 By Proposition \ref{lie-irreducible}, $G$ is Lie-irreducible in its given representation. If $p=2$, then $\FF_d$ is self-dual (as it has real Frobenius traces), so by \cite[Proposition 11.7]{such2000monodromy}, the Lie algebra of $G$ is either $\mathfrak{sp}_{d-1}$ in its standard representation or $\mathfrak{e}_7$ in its 56-dimensional representation. But for $d=57=\frac{2^9+1}{2^3+1}$ the monodromy of $\FF_d$ is finite by Corollary \ref{case1}, so we must be in the former case. So the identity component $G^0$ of $G$ must be $\mathrm{Sp}_{d-1}$, and therefore $G=\mathrm{Sp}_{d-1}$ by self-duality of $\FF_d$.

 Suppose now that $p\neq 2$. By \cite[Proposition 11.6]{such2000monodromy}, $G^0$ is then either $\mathrm{Sp}_{d-1}$ or $\mathrm{SL}_{d-1}$ in their standard representations. If $d$ is even the former case is not possible, so $G^0=\mathrm{SL}_{d-1}$ and $G=\det^{-1}(\det(G))$. But by \cite[Theorem 17]{katz-monodromy}, the determinant of $\FF_d$ is geometrically trivial, so $G=\mathrm{SL}_{d-1}$. If $d$ is odd, then $\FF_d$ is again self-dual (as it has real Frobenius traces), so by the previous argument $G$ must be $\mathrm{Sp}_{d-1}$.
\end{proof}

\bibliographystyle{amsalpha}
\bibliography{bibliography}

\end{document}